\begin{document}

\title{Bounded-excess flows in cubic graphs}

 \author{
 Michael Tarsi
\thanks{Research supported in part by FWF-grant P27615-N25,
headed by Herbert Fleischner}\\
 {\normalsize The Blavatnik School of Computer Science}\\
 {\normalsize Tel-Aviv university, Israel}\\
 {\normalsize tarsi@post.tau.ac.il}\\
 }



\maketitle

\newtheorem{theorem}{Theorem}
\newtheorem{lemma}[theorem]{Lemma}
\newtheorem{claim}[theorem]{Claim}
\newtheorem{rrule}[theorem]{Rule}
\newtheorem{definition}[theorem]{Definition}
\newtheorem{notation}[theorem]{Definitions and Notation}
\newtheorem{example}[theorem]{Example}
\newtheorem{corollary}[theorem]{Corollary}
\newtheorem{conjecture}[theorem]{Conjecture}
\newtheorem{observation}[theorem]{Observation}
\newtheorem{proposition}[theorem]{Proposition}

\newcommand{\Po}{\cal{P}^*}
\newcommand{\Pa}{\cal{P}}
\newcommand{\Qo}{\cal{Q}^*}
\newcommand{\Qa}{\cal{Q}}
\def\scc{\mathop{\mathrm{scc}}\nolimits}

\begin{abstract} \noindent
An $(r,\alpha)$-bounded excess flow $((r,\alpha)$-flow) in an
orientation of  a graph $G=(V,E)$ is an assignment $f:E
\rightarrow [1,r-1]$, such that for every vertex $x\in V$,
$|\sum_{e\in E^+(x)}f(e)-\sum_{e\in E^-(x)}f(e)| \leq \alpha$.
 $E^+(x)$, respectively $E^-(x)$, are the sets of edges
directed from, respectively toward $x$. Bounded excess flows
suggest a generalization of Circular nowhere zero flows, which can
be regarded as $(r,0)$-flows.
 We define $(r,\alpha)$ as {\bf Stronger} or equivalent to $(s,\beta)$
If the existence of an $(r,\alpha)$-flow in a cubic graph always
implies the existence of an $(s,\beta)$-flow in the same graph. We
then study the structure of the bounded excess flow strength
poset. Among other results, we define the {\bf Trace} of a point
in the $r$-$\alpha$ plane by
$tr(r,\alpha)=\frac{r-2\alpha}{1-\alpha}$ and prove that among
points with the same trace the stronger is the one with the
smaller $\alpha$ (and larger $r$). e.g. If a cubic graph admits a
$k$-nzf (trace $k$ with $\alpha=0$) then it admits an
$(r,\frac{k-r}{k-2})$-flow for every $r$, $2\leq r \leq k$.
  A significant part of the article is devoted to proving the main result: Every cubic graph admits a
  $(3\frac 12,\frac 12)$-flow, and there exists a graph which does not
  admit any stronger bounded excess flow. Notice that $tr(3\frac 12,\frac
  12)=5$ so it can be considered a step in the direction of the $5$-flow Conjecture.
  Our result is the best possible for all cubic graphs while the
  seemingly stronger
  5-flow Conjecture relates only to bridgeless graphs.
We also show that if the circular flow number of a cubic graph is
strictly less than $5$ then it admits a $(3\frac 13,\frac13)$-flow
(trace 4). We conjecture such a flow to exist in every cubic graph
with a perfect matching, other than the Petersen graph. This
conjecture is a stronger version of the Ban-Linial Conjecture
\cite{banlin}, \cite{emt2}. Our work here strongly rely on the
notion of  {\bf Orientable $k$-weak bisections}, a certain type of
$k$-weak bisections. $k$-weak bisections are defined and studied
in \cite{emt2}.

\end{abstract}


\section{Introduction}\label{sec:flo}
\subsection{Preliminaries}\label{sec:flo}

We assume familiarity with the theory of {\bf Nowhere-zero flows}
(nzf) (See \cite{cq} for a thorough study) and {\bf Circular
nowhere-zero flows} (cnzf).
\smallskip

\begin{definition}
 Given two real  numbers $r\geq 2$ and $\alpha \geq 0$, an $(r,\alpha)$-{\bf bounded excess flow}, $(r,\alpha)$-{flow} for short, in a directed graph $D=(V,E)$ is
 an assignment $f:E \rightarrow [1,r-1]$, such that $f$ is a {\bf flow} in $D$, with possibly some
  deficiency or excess, which does not exceed  $\alpha$ per vertex. That is,
 for every vertex $x\in V$, $|\sum_{e\in E^+(x)}f(e)-\sum_{e\in
E^-(x)}f(e)| \leq \alpha$, where $E^+(x)$, respectively $E^-(x)$,
is the set of edges directed from, respectively toward $x$ in $D$.
\end{definition}
With that notation an $r$-cnzf
 can be referred to as an $(r,0)$-flow. Notice that while nowhere
 zero flows are restricted to bridgeless graphs, this
 is  not the case for bounded excess flows, where a bridge can
 carry some excess from one of its "sides" to the other.

 We say that an undirected graph $G$ admits an $(r,\alpha)$-flow if there exists an orientation of $G$ which admits
 such a flow.

In this article we study bounded excess flows in {\bf cubic}
graphs. A graph may have parallel edges, but no loops.

\begin{notation} \label{parcol}

(Rather than a stand-alone definition, we post under that title
lists of related definitions and notational remarks).
\begin{itemize}
 \item A {\bf Vertex partition} of a graph $G=(V,E)$ is a partition $\Psi=(V_1,V_2)$ of its vertex set $V$
 into two disjoint subsets, $V=V_1 \cup V_2$,  $V_1 \cap V_2=\emptyset$.
 \item Whenever it comes convenient, we use vertex partition and vertex
 {\bf coloring} $\Psi:V\rightarrow \{1,2\}$ as synonyms, where $\Psi(v)=i\Leftrightarrow v\in V_i\Leftrightarrow$ "the color of $v$ is
 $i$".
\item Once interpreted as a vertex 2-coloring  a vertex partition of a
graph naturally induces a vertex partition of every subgraph, and
conversely, the {\bf union} of vertex partitions of vertex
disjoint subgraphs naturally provides a coloring of their graph
union.
\item A {\bf Bisection} of $G=(V,E)$ is a vertex partition $(V_1,V_2)$ into two subsets of equal size,
 $|V_1|=|V_2|$.
\end{itemize}
Let $(V_1,V_2)$ be a vertex partition of a graph $G=(V,E)$ and let
$A\subseteq V$ be any set of vertices. We then use the following
notation:
\begin{itemize}
\item The set of edges with one
 endvertex in $A$ and the other one in $V \setminus A$, known as the
 \textbf{edge-cut} induced by $A$, is denoted here by $E(A)$. Its
 cardinality $|E(A)|$ is denoted by $d(A)$. When more than one
 graph (or subgraph) is involved we can use $d_G(A)$ to avoid
 ambiguity.
\item Subject to a given orientation of $G$, $E(A)$ is partitioned into $E^+(A)$ and $E^-(A)$, the sets of edges directed from $A$ and into $A$.
We also denote in that case
 $d^+(A)=|E^+(A)|$ and $d^-(A)=|E^-(A)|$.
\item Given a vertex partition $\Psi=(V_1,V_2)$, $\delta_{\Psi}(A)=|V_2 \cap A|-|V_1 \cap A|$
and $\Delta_{\Psi}(A)=|\delta_{\Psi}(A)|$. When there is no
confusion we omit $\Psi$ and use $\delta(A)$ and $\Delta(A)$.
\end{itemize}

 The following terminology is restricted to cubic graphs:
\begin{itemize}
\item An orientation of a cubic graph is {\bf Balanced} if the
outdegree of every vertex is either $1$ or $2$ (namely, there is
no vertex of outdegree $0$ or $3$).
\item We say that a bisection $\Psi=(V_1,V_2)$ of a cubic graph $G$ is {\bf Orientable} if there exists a
 (clearly balanced) orientation
of $G$  where $V_i$, $i=1,2$ are the sets of vertices with
outdegree $i$, or equivalently for every vertex $v$,
$\Psi(v)=d^+(v)$.

\end{itemize}
\end{notation}

The following is a simple instance of a theorem on feasible
orientations of graphs. As we did not find a direct reference to
that specific instance, we leave it as an exercise for the reader
(A hint: Use Hall's  theorem to match vertices $\times$ outdegree
into edges):

\begin{lemma} \label{orientable} A bisection $(V_1,V_2)$ of a
cubic graph  $G=(V,E)$ is orientable if and only if $d(A)\geq
\Delta(A)$ for every set of vertices $A \subseteq V$.
\end{lemma}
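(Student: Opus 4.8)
The plan is to recast orientability as a bipartite matching problem, exactly as the hint suggests, and let Hall's theorem do the work; both directions then come out of the same degree bookkeeping.

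\emph{Necessity.} Suppose $G$ carries a balanced orientation realizing $\Psi=(V_1,V_2)$, i.e. $d^+(v)=\Psi(v)$ for every $v$. Fix $A\subseteq V$. I would first observe that each vertex $v\in A$ contributes $d^+(v)-d^-(v)$ to $d^+(A)-d^-(A)$, since edges with both ends in $A$ cancel; and $d^+(v)-d^-(v)$ equals $-1$ when $v\in V_1$ (out-degree $1$, in-degree $2$) and $+1$ when $v\in V_2$. Summing over $A$ gives $d^+(A)-d^-(A)=\delta(A)$. Since $d^+(A),d^-(A)\ge 0$ and $d^+(A)+d^-(A)=d(A)$, we get $\Delta(A)=|\delta(A)|=|d^+(A)-d^-(A)|\le d(A)$, which is the asserted inequality.

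\emph{Sufficiency.} Assume $d(A)\ge\Delta(A)$ for all $A\subseteq V$. Form a bipartite graph $H$ whose left side consists of ``tokens'' --- $\Psi(v)$ copies of each vertex $v$ --- and whose right side consists of the edges of $G$, every copy of $v$ being joined to each edge of $G$ incident with $v$. Both sides have $\tfrac32|V|$ elements (since $|V_1|=|V_2|$ and $G$ is cubic), so a perfect matching of $H$ assigns to each edge of $G$ one of its endpoints; orienting every edge away from its assigned endpoint produces an orientation in which each $v$ has out-degree exactly $\Psi(v)$, hence a balanced orientation realizing $\Psi$. It therefore remains to verify Hall's condition for $H$. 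All $\Psi(v)$ copies of a vertex have the same neighbourhood, so in checking $|N(S)|\ge|S|$ one may enlarge any token set $S$ to contain every copy of every vertex it meets without enlarging $N(S)$; thus it suffices to treat sets of the form ``all copies of the vertices of some $A\subseteq V$''. Such a set has $\sum_{v\in A}\Psi(v)=|A|+|V_2\cap A|$ tokens, and its neighbourhood is the set of all edges meeting $A$, of size $e(A)+d(A)$ where $e(A)$ counts the edges inside $A$. Substituting $e(A)=(3|A|-d(A))/2$, Hall's inequality $e(A)+d(A)\ge|A|+|V_2\cap A|$ simplifies to $d(A)\ge|V_2\cap A|-|V_1\cap A|=\delta(A)$, which holds because $d(A)\ge\Delta(A)\ge\delta(A)$.

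\emph{Expected obstacle.} There is no deep difficulty here; the real content is in choosing $H$ so that Hall's condition becomes precisely the hypothesis, and the only points needing a line of care are the reduction to vertex-closed token sets and the two degree identities ($\sum_{v\in A}(d^+(v)-d^-(v))=\delta(A)$ and $2e(A)+d(A)=3|A|$). It is worth noting in passing that the sufficiency argument only uses $d(A)\ge\delta(A)$; the apparently stronger form $d(A)\ge\Delta(A)$ is what the necessity direction forces, and one recovers it from $d(A)\ge\delta(A)$ by applying the latter to $V\setminus A$ (for a bisection, $d(V\setminus A)=d(A)$ and $\delta(V\setminus A)=-\delta(A)$).
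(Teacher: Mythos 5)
Your proof is correct and follows exactly the approach the paper suggests in its hint (``use Hall's theorem to match vertices $\times$ outdegree into edges''), which is all the paper offers since it leaves this lemma as an exercise. Both the necessity computation via $d^+(A)-d^-(A)=\delta(A)$ and the reduction of Hall's condition to $d(A)\ge\delta(A)$ on vertex-closed token sets are clean, and your closing remark that $d(A)\ge\Delta(A)$ is recovered from $d(A)\ge\delta(A)$ by passing to $V\setminus A$ (using that $\Psi$ is a bisection) correctly ties up the apparent asymmetry.
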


Our next result relies on the following classical variant of the
"Max-flow Min-cut" theorem which deals with feasible flows in
Flow-Networks with upper and lower edge capacities (see e.g.
\cite{berge} p-88, where credit for that result is given to J.
Hoffman):

\begin{theorem} \label{uplow}
Given a flow network which consists of a  directed graph $D=(V,E)$
with upper and lower capacity functions $u:E \rightarrow R$ and
$l:E \rightarrow R$, $l(e) \leq u(e)$, there exists a {\bf
feasible} flow (no excess is allowed) $f:E\rightarrow R$ which
satisfies $l(e) \leq f(e) \leq
u(e)$ for every edge $e$, if and only if:\\
For every set of vertices $A\subseteq V$ \[\tag {*} \sum_{e\in
E^+(A)}l(e) \leq \sum_{e\in E^-(A)}u(e)\].
\end{theorem}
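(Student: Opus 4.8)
The plan is to prove the two implications separately; necessity is a one-line averaging argument, and all the content sits in the ``if'' direction, which I would obtain from an augmenting-path analysis (equivalently, from ordinary max-flow--min-cut).

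\textbf{Necessity.} Given a feasible circulation $f$ and a set $A\subseteq V$, sum the conservation identity $\sum_{e\in E^+(x)}f(e)=\sum_{e\in E^-(x)}f(e)$ over all $x\in A$: every edge internal to $A$ cancels, leaving $\sum_{e\in E^+(A)}f(e)=\sum_{e\in E^-(A)}f(e)$. Bounding the left-hand side below by $l$ and the right-hand side above by $u$ yields $\sum_{e\in E^+(A)}l(e)\le\sum_{e\in E^-(A)}u(e)$, i.e.\ $(*)$.

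\textbf{Sufficiency, the iteration.} I would start from a function that is feasible edge by edge but not conservative --- say $f(e)=l(e)$ for all $e$ --- and track the excess $\mathrm{ex}(x)=\sum_{e\in E^-(x)}f(e)-\sum_{e\in E^+(x)}f(e)$, noting $\sum_{x\in V}\mathrm{ex}(x)=0$. Form the residual digraph $D_f$ on $V$ in which each $e=(x,y)\in E$ gives a forward arc $x\to y$ when $f(e)<u(e)$ and a backward arc $y\to x$ when $f(e)>l(e)$. The key step is: as long as the imbalance $\sum_x|\mathrm{ex}(x)|$ is positive there exist vertices $p,q$ with $\mathrm{ex}(p)>0>\mathrm{ex}(q)$, and if $D_f$ contains a directed $p$--$q$ path, then pushing a small $\varepsilon>0$ along it --- raising $f$ on forward arcs, lowering it on backward arcs, with $\varepsilon$ at most the residual bottleneck and at most $\min(\mathrm{ex}(p),-\mathrm{ex}(q))$ --- preserves $l\le f\le u$, leaves $\mathrm{ex}$ unchanged at internal vertices, and moves $\mathrm{ex}(p)$ and $\mathrm{ex}(q)$ toward $0$, so $\sum_x|\mathrm{ex}(x)|$ strictly decreases.

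\textbf{Sufficiency, why one never gets stuck.} Suppose at some stage $\sum_x|\mathrm{ex}(x)|>0$ but no directed $D_f$-path joins a positive-excess vertex to a negative-excess one, and let $A$ be the set of vertices reachable in $D_f$ from $\{x:\mathrm{ex}(x)>0\}$. Then $A$ is nonempty, contains every positive-excess vertex and no negative-excess vertex, so $\sum_{x\in A}\mathrm{ex}(x)>0$; and by maximality $f(e)=u(e)$ for every $e\in E^+(A)$ and $f(e)=l(e)$ for every $e\in E^-(A)$, else a residual arc would leave $A$. Summing the excess over $A$ (internal edges cancelling) gives $0<\sum_{x\in A}\mathrm{ex}(x)=\sum_{e\in E^-(A)}f(e)-\sum_{e\in E^+(A)}f(e)=\sum_{e\in E^-(A)}l(e)-\sum_{e\in E^+(A)}u(e)$; since $E^-(A)=E^+(V\setminus A)$ and $E^+(A)=E^-(V\setminus A)$, this is exactly a violation of $(*)$ for the set $V\setminus A$ --- a contradiction. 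Hence the imbalance is driven to $0$ and the resulting $f$ is the desired feasible circulation.

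\textbf{The main obstacle, and an alternative route.} The one genuinely delicate point is \emph{termination} of the augmentation process when the capacities are real: an infinite sequence of augmentations is a priori possible. I would handle this either by first reducing to rational $l,u$ --- enough for every application in this paper --- so that a bottleneck-saturating augmentation strictly lowers a nonnegative fixed-denominator quantity and the process halts, or by avoiding iteration entirely: the edge-feasible functions form a compact polytope on which the argument above shows $f\mapsto\sum_x|\mathrm{ex}(x)|$ admits no positive minimum, hence attains $0$. An equally valid alternative for the ``if'' direction is the standard reduction to max-flow--min-cut: write $f=l+g$, adjoin a super-source $s$ with an arc $s\to x$ of capacity $\mathrm{ex}(x)$ for each $x$ with $\mathrm{ex}(x)>0$, a super-sink $t$ with an arc $x\to t$ of capacity $-\mathrm{ex}(x)$ for each $x$ with $\mathrm{ex}(x)<0$, and capacities $u(e)-l(e)$ on the original arcs; then a legal $g$ exists iff the maximum $s$--$t$ flow saturates $s$, and evaluating a minimum cut on any set of the form $\{s\}\cup A$ reproduces precisely condition $(*)$ for $V\setminus A$.
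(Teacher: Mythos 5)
The paper does not prove this statement at all: it cites it as a known classical result, namely Hoffman's circulation theorem, with a pointer to Berge's \emph{Graphs and Hypergraphs}, p.~88. So there is no ``paper's proof'' to compare against; you have supplied a proof from scratch where the author simply invoked a reference.

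That said, your proof is correct and complete. The necessity direction (sum conservation over $A$, then bound below by $l$ on $E^+(A)$ and above by $u$ on $E^-(A)$) is exactly right. The sufficiency direction is the standard residual-graph argument, and you handle the two points that usually get glossed over. First, the stuck-case cut analysis is correct: taking $A$ to be the set of vertices reachable in $D_f$ from the positive-excess vertices, maximality forces $f=u$ on $E^+(A)$ and $f=l$ on $E^-(A)$, whence $0<\sum_{x\in A}\mathrm{ex}(x)=\sum_{e\in E^-(A)}l(e)-\sum_{e\in E^+(A)}u(e)$, and after the identifications $E^-(A)=E^+(V\setminus A)$, $E^+(A)=E^-(V\setminus A)$ this is precisely a violation of $(*)$ at $V\setminus A$. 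Second, you correctly flag that na\"ive Ford--Fulkerson augmentation need not terminate over the reals, and both of your fixes are sound: restricting to rational capacities (which suffices for every use in this paper, since the network built in Theorem~\ref{first} has rational bounds whenever $r,\alpha$ are rational, and the result then extends by density/convexity) or, more cleanly, the compactness argument that $\sum_x|\mathrm{ex}(x)|$ is continuous on the compact box $\prod_e[l(e),u(e)]$, attains its minimum there, and the augmentation step shows that a positive minimum is impossible. The reduction to ordinary max-flow--min-cut via a super-source/super-sink is also stated correctly and would serve equally well.

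One cosmetic remark: when you invoke the stuck-case cut with $A=V$, $E^+(A)$ and $E^-(A)$ are both empty, so the argument degenerates; but this cannot arise, since $\sum_{x\in V}\mathrm{ex}(x)=0$ while $\sum_{x\in A}\mathrm{ex}(x)>0$ forces $A\neq V$. It is worth saying explicitly that $A$ is a proper nonempty subset, if only to make the appeal to $(*)$ for $V\setminus A$ unambiguous.
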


\subsection{An elemental theorem}

We can now state a necessary and sufficient condition for the
existences of an $(r,\alpha)$-flow
 in a cubic graph:

\begin{theorem} \label{first}
A cubic graph $G=(V,E)$ admits an $(r,\alpha)$-flow with $r\geq 2$
and $0 \leq \alpha < 3$ if and only if there exists a bisection
$(V_1,V_2)$ of $G$ which complies with the following two
conditions:
 \begin{enumerate}
\item For every subset $A$ of $V$, $d(A)\geq \Delta(A)$ and
\item For every subset $A$ of $V$, $\alpha \geq \frac{2d(A) -
(d(A)-\Delta(A))r}{2|A|}$
\end{enumerate}
Condition 1 becomes redundant if $(V_1,V_2)$ is known to be
orientable.
\end{theorem}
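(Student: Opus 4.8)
The plan is to connect the combinatorial conditions directly to the flow-network feasibility criterion of Theorem~\ref{uplow}, using the orientability criterion of Lemma~\ref{orientable} as an intermediate layer. I would start with the forward direction. Suppose $G$ admits an $(r,\alpha)$-flow $f$ in some orientation $D$; this orientation must be balanced (a vertex of outdegree $0$ or $3$ would have excess at least $3-\alpha>0$ in absolute value when $\alpha<3$, since each edge carries at least $1$; more precisely, at such a vertex $|\sum f(e^+)-\sum f(e^-)|\ge 3$), so it defines a bisection $(V_1,V_2)$ with $\Psi(v)=d^+(v)$. Condition~1 is then immediate from Lemma~\ref{orientable}. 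For Condition~2, fix $A\subseteq V$ and sum the excess inequality over all $v\in A$: the contributions of edges internal to $A$ cancel, leaving $\sum_{v\in A}\bigl(\sum_{e\in E^+(v)}f(e)-\sum_{e\in E^-(v)}f(e)\bigr)=\sum_{e\in E^+(A)}f(e)-\sum_{e\in E^-(A)}f(e)$, whose absolute value is at most $2\alpha|A|$ by the triangle inequality. On the other hand, using $1\le f(e)\le r-1$, this signed sum is at least $d^+(A)-(r-1)d^-(A)$ and, symmetrically (looking at $V\setminus A$ or reversing roles), one extracts $2d(A)-(d(A)-\Delta(A))r \le 2\alpha|A|$ after bookkeeping that relates $d^+(A)-d^-(A)$ to $\delta_\Psi(A)$ via $d^+(A)-d^-(A)=\sum_{v\in A}(2d^+(v)-3)=2\sum_{v\in A}\Psi(v)-3|A|$, and hence $|d^+(A)-d^-(A)|\le \Delta(A)+\text{(something)}$; the clean way is to note $d^+(A)-d^-(A) = (\text{twice number of }2\text{-colored vertices in }A) - |A| - (\text{twice number of }1\text{-colored}) + \ldots$ — I will organize this so the worst case of the bound is exactly when the flow is forced to its extreme values on the cut. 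This yields Condition~2.

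For the converse, assume a bisection $(V_1,V_2)$ satisfying Conditions~1 and~2 is given. By Condition~1 and Lemma~\ref{orientable} the bisection is orientable, so fix a balanced orientation $D$ with $d^+(v)=\Psi(v)$ for all $v$. Now I want to produce the flow. The idea is to model ``$(r,\alpha)$-flow in $D$'' as a feasible-flow problem in an auxiliary network and apply Theorem~\ref{uplow}. Concretely, build $D'$ from $D$ by adding a new vertex $z$ and, for each original vertex $v$, an edge between $v$ and $z$ that absorbs the excess: orient it and assign it capacity interval $[-\alpha,\alpha]$ (or split into two parallel arcs with capacities $[0,\alpha]$ in each direction, whichever keeps the statement of Theorem~\ref{uplow} literally applicable with $l\le u$). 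Keep the original edges with capacities $[1,r-1]$. A feasible flow in $D'$ restricts to an $(r,\alpha)$-flow in $D$, because the edge to $z$ at $v$ carries exactly the excess at $v$, which is then bounded by $\alpha$; conservation at $z$ is automatic since the total excess over all vertices is $0$. So it remains to verify the cut condition $(*)$ of Theorem~\ref{uplow} for every $A\subseteq V(D')$.

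The verification of $(*)$ is where the real work sits, and I expect it to be the main obstacle — specifically, checking $(*)$ for an arbitrary subset $A$ of the enlarged vertex set and reducing it to Condition~2 on a subset of the original $V$. If $z\notin A$, then $\sum_{e\in E^+(A)}l(e)$ counts, over the original cut edges leaving $A$, a contribution of $1$ each, plus over the $z$-edges from $A$ a contribution of $-\alpha$ each (or $0$, depending on the orientation convention chosen), while $\sum_{e\in E^-(A)}u(e)$ counts $r-1$ per original cut edge entering $A$ plus $\alpha$ per $z$-edge into $A$ — and since every vertex of $A$ has exactly one $z$-edge, the $z$-edge terms together contribute $+\alpha|A|$ on the right-hand side (after absorbing signs), so $(*)$ becomes $d^+(A)\cdot 1 \le d^-(A)(r-1) + \alpha|A| + (\text{leftover }\alpha\text{ terms})$; rearranging and using $d^+(A)+d^-(A)=d(A)$ together with the balanced-orientation identity controlling $d^+(A)-d^-(A)$ in terms of $\Delta(A)$, this is exactly Condition~2. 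The case $z\in A$ is handled by replacing $A$ with its complement (note $(*)$ for $A$ is equivalent to $(*)$ for $V(D')\setminus A$ since it is a statement about the same cut with the roles of $l$ and $u$ swapped by reorienting — I will state this symmetry cleanly once and reuse it). The one point demanding care is the exact sign/orientation convention for the $z$-edges and making sure that across all $A$ the ``$\alpha$-budget'' assembled from the $z$-edges matches the $2\alpha|A|$ (note the factor $2$: each vertex contributes an excess bound $\alpha$, but in Condition~2 the denominator is $2|A|$, reflecting that the cut $E(A)$ is shared — so I will likely give each vertex a single $z$-edge of capacity interval $[-\alpha,\alpha]$ and find the factor $2$ emerging from $d^+(A)$ versus $d^-(A)$ rather than from the $z$-edges). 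Finally I note that when orientability is assumed a priori, Condition~1 was only ever used to invoke Lemma~\ref{orientable}, so it may be dropped — this gives the last sentence of the theorem.
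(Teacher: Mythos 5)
Your plan is essentially the paper's own proof: the paper also builds the auxiliary network with an ``excess collector'' vertex ($x$ there, $z$ here), attaches a pair of anti-parallel arcs of capacity $[0,\alpha]$ at each vertex (exactly your parenthetical alternative --- and you should use that form, since Theorem~\ref{uplow} as stated requires $l\le u$), invokes Theorem~\ref{uplow}, and translates the cut condition~(*) into Condition~2 by considering sets $A\subseteq V$ and sets of the form $(V\setminus A)\cup\{x\}$, using $d^{\pm}(A)=\tfrac12\bigl(d(A)\pm\delta(A)\bigr)$. Your forward direction by direct summation over $A$ is a small streamlining (the paper reuses the network there too, remarking the translation is reversible), and your handling of Condition~1 via Lemma~\ref{orientable} and of the redundancy remark is the same.

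Two corrections in the forward direction. The triangle-inequality bound is $\bigl|\sum_{e\in E^+(A)}f(e)-\sum_{e\in E^-(A)}f(e)\bigr|\le\alpha|A|$, not $2\alpha|A|$: each vertex of $A$ contributes excess of absolute value at most $\alpha$. The factor $2$ in Condition~2 emerges only after you substitute $d^-(A)=\tfrac12\bigl(d(A)-\delta(A)\bigr)$ into $d^+(A)-(r-1)d^-(A)\le\alpha|A|$ and clear the denominator; as written, your $2\alpha|A|$ bound would deliver Condition~2 with $\alpha$ replaced by $2\alpha$, which is too weak. Second, there is no ``$\le\Delta(A)+\text{(something)}$'' to chase: in a balanced orientation with $\Psi(v)=d^+(v)$ one has the exact identity $d^+(A)-d^-(A)=\sum_{v\in A}\bigl(2d^+(v)-3\bigr)=\delta_{\Psi}(A)$ (internal-edge contributions cancel on both $d^+$ and $d^-$), hence $|d^+(A)-d^-(A)|=\Delta(A)$. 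The two linear inequalities --- one from the $+\alpha|A|$ bound, one from the $-\alpha|A|$ bound, i.e.\ $\delta(A)$ versus $-\delta(A)$, which in the network picture is exactly $z\notin A$ versus $z\in A$ --- then combine into Condition~2 as you intend.
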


\begin{proof} Given an $(r,\alpha)$-flow in an orientation of a
cubic graph $G=(V,E)$, $\alpha < 3$ guaranties a balanced
orientation. Accordingly, $(V_1,V_2)$, where $V_i$, $i=1,2$ are
the sets of vertices with outdegree $i$, is an orientable
bisection and as such satisfies Condition 1.

Transform  now $G$ into a flow-network $\bar{G}$ by inserting one
additional "excess collector" vertex $x$ and a couple of
anti-parallel edges $xv$ and $vx$ for every vertex $v \in V$ of
$G$, one directed from $x$ toward $v$ and the other one from $v$
toward $x$. Set the lower capacity of the original edges of $G$ to
$1$ and their upper capacity to $r-1$. The edges incident with $x$
all get lower
capacity $0$ and upper capacity $\alpha$.\\
Equivalence between an $(r,\alpha)$-flow in $G$ and a feasible
flow in the network $\bar G$ is apparent.  $\bar G$ therefore
complies with condition (*) of Theorem \ref{uplow}. We now show
that Condition 2 (of Theorem \ref{first}) is satisfied by the
original graph $G$:

 When (*) is stated for a set $A
\subseteq V$, that is a set of vertices of $\bar G$ which does not
include $x$, it becomes:
\[\tag {**} 1 \cdot d^+_G(A)+0\cdot |A| \leq (r-1)d^-_G(A)+\alpha|A|\]
The first summand in each side reflects the capacity of the
original edges of $G$ and the second summand
relates to the edges incident with $x$.\\
(**) can be restated as:
\[\alpha \geq \frac{d^+_G(A)+d^-_G(A) - d^-_G(A)r}{|A|}\]

 Clearly $d^+(A)+d^-(A)=d(A)$,
$d^+(A)-d^-(A)=\delta(A)$, and hence $d^-(A)=\frac
{d(A)-\delta(A)}2$, which provides:
\[\tag {***} \alpha \geq \frac{2d(A) - (d(A)-\delta(A))r}{2|A|}\]

We should still consider sets of vertices of $\bar G$  which do
include the excess collector $x$. Such a set is of the form
$(V\setminus A) \cup \{x\}$ where $A$ is a subset of $V$. Notice
that the number of edges incident with $x$ in $E^+((V \setminus A)
\cup \{x\})$ and in $E^-((V \setminus A) \cup \{x\})$ is $|A|$.
Inequality (*) for that set is then:
\[\tag{****} 1 \cdot d^+(V\setminus
A)+0\cdot|A|\leq(r-1)d^-(V\setminus A)+\alpha|A|\] Since
$d^+(V\setminus A) =d^-(A)$ and $d^-(V\setminus A) =d^+(A)$,
(****) is obtained from (**) when $d^+$ and $d^-$ exchange roles,
which finally comes to replacing $\delta(A)$ by $-\delta(A)$ in
(***):
\[\tag{*****} \alpha \geq \frac{2d(A) - (d(A)+\delta(A))r}{2|A|}\]
Depending on the sign of $\delta(A)$, either (***) or (*****) is
redundant and they hence combine into Condition 2 of Theorem
\ref{first}.\\
For the proof of the "if" direction: Let $G=(V,E)$ and a bisection
$(V_1,V_2)$ of $G$ comply with Conditions 1 and 2.  By Lemma
\ref{orientable} and Condition 1 we assume an orientation of $G$
where $V_i$, $i=1,2$ are the sets of vertices with outdegree $i$.
Construct now the flow-network $\bar G$ as described above. Notice
that the derivation of Condition 2 from (*) of Theorem \ref{uplow}
is fully reversible: Condition 2 clearly implies (*****) and
(***). Then (**) is obtained from (***) by the substitutions
$d(A)=d^+(A)+d^-(A)$ and $\delta(A)=d^+(A)-d^-(A)$. Finally (**)
translates into (*) for all sets $A$ which do not include $x$.
Replacing (***) by (*****) takes care of all sets which do include
$x$. Theorem \ref{uplow} confirms the existence of a feasible flow
in $\bar G$ and equivalently an $(r,\alpha)$-flow in $G$.
\end{proof}

Theorem \ref{first} and its proof are generalizations of the case
$\alpha=0$, stated and proved in \cite{bvj} and in \cite{steff}.
When $\alpha=0$ Condition 1 is implied by Condition 2 and
therefore not explicitly stated in \cite{bvj} and in \cite{steff}.

\section{The bounded excess flows poset}
\subsection{Terminology}

 Whenever $r\leq s$ an
$r$-cnzf is {\bf stronger} than an $s$-cnzf, in the sense that the
existence of the first implies that of the second. In this section
we study the more complex two dimensional hierarchy among bounded
excess flows.

\begin{notation}
\leavevmode
\begin{itemize}
\item We use the notation $(r,\alpha)
\preceq(s,\beta)$
 when every cubic graph
which admits an $(r,\alpha)$-flow also admits an $(s,\beta)$-flow.
We say in that case that $(r,\alpha)$ is {\bf stronger} or
equivalent  to $(s,\beta)$.
\item $(r,\alpha)$ and
$(s,\beta)$ are {\bf equivalent} if $((r,\alpha) \preceq
(s,\beta)) \wedge ((s,\beta) \preceq (r,\alpha ))$.
\end{itemize}
\end{notation}
"Strong is Small" may be confusing, yet we prefer consistency with
nowhere zero flows.

Properties of the $\preceq$ order can be visualized in the
$r$-$\alpha$ plane (more accurately the upper right quadrant of
that plane, with $(2,0)$ as the origin). Some specific points,
lines and regions on that plane play major roles in our analysis.
To ease the formulation we adopt the following labeling
(Occasionally referring to Figure \ref{pos} while reading this and
the following sections is advised).

{\bf Remark}: Considering Theorem \ref{or5} the strength poset
collapses at $(3\frac12,\frac12)$ into a universal weakest
equivalence class, denoted in Figure \ref{pos} by $\Omega$.
Nonetheless, we chose to state our definitions, results and proofs
in  a general setting ("an integer $k$" rather than $k=4$ in some
cases, or $k<6$ in most). We found that approach preferable for
better insight into the subject without making things notably
harder or more complex than treating each case separately.
\begin{notation}
 \leavevmode
\item Given an integer $k\geq 3$, we define:
\begin{itemize}

\item $L_k$ is the segment of the line $\alpha=\frac{k-r}{k-2}$
between the points $(2,1$) (excluded) and $(k,0)$ (included).
\item $M_k$ is the upper part of $L_k$ from
$(3+\frac{k-3}{k-1},\frac{k-3}{k-1})$ (included) to $(2,1)$.
\item $A_k=\{(r,\alpha)|(2<
r<4)\wedge(\frac{k-r}{k-2}\leq\alpha<\frac{k-r+1}{k-1})\}$\\
is the half open triangle whose vertices are
$(3+\frac{k-3}{k-1},\frac{k-3}{k-1})$, $(2,1)$ and
$(4,\frac{k-3}{k-1})$, with the lower and the left $(M_k)$ edges
included, but not the upper-right edge and its two endvertices.

\item Let the {\bf Upper right domain}  $urd(r_0,\alpha_0)$ of a point
$(r_0,\alpha_0)$ be the upper-right closed unbounded polygonal
domain whose vertices are $(2,\infty),(2,1),(r_0,\alpha_0)$ and
$(\infty,\alpha_0)$.
\item For a cubic graph $G$, let the {\bf Bounded excess domain} $bed(G)$ be the set of all pairs
$(r,\alpha)$ such that $G$ admits an $(r,\alpha)$-flow.

\item If $D$ is a (balanced) orientation of $G$ then $bed(D)$ is the subset
of $bed(G)$ consisting of all points $(r,\alpha)$ for which there
exists an $(r,\alpha)$-flow in $D$.

\item For a given point $(r_0,\alpha_0)$ we define\\ {\bf
span}$(r_0,\alpha_0)=\{(r,\alpha)|(r_0,\alpha_0)\preceq
(r,\alpha)\}$. Equivalently,
\[span(r_0,\alpha_0)=\bigcap_{\{G|(r_0,\alpha_0)\in bed(G)\}} bed(G).\]
\end{itemize}

\end{notation}

\subsection{The Trace of a bounded excess flow and the role of $urd(r_0,\alpha_0)$}
When a balanced orientation $D$ (with the associated bisection
$\Psi(v)=d^+(v)$) of $G=(V,E)$ and a set $A\subseteq V$ (and
therefore $d(A),\Delta(A)$ and $|A|$) are kept fixed, Condition
$2$ of Theorem \ref{first}, as a linear inequality, corresponds to
an (upper right) half-plane of the $r$-$\alpha$ plane. Given a
balanced orientation $D$ of a cubic graph $G=(V,E)$, the set
$bed(D)$ of points $(r,\alpha)$ which complies with that
inequality for every $A\subseteq V$ is then an intersection of
(finitely many) half-planes and as such it is a $\bf convex$
unbounded polygonal domain. $bed(G)$ is the union of $bed(D)$ over
all balanced orientations $D$ of $G$. That is still an unbounded
polygonal domain. Convexity is not a-priori guaranteed, yet
clearly:

\begin{lemma}\label{conv}
If an $(r,\alpha)$-flow and an $(s,\beta)$-flow both exist in the
{\bf same orientation} $D$ of a cubic graph $G$, then the  line
segment between $(r,\alpha)$ and $(s,\beta)$ is entirely contained
in $bed(D)$, and therefore also in $bed(G)$.
\end{lemma}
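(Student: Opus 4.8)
\emph{Proof proposal.} The plan is to exploit the fact that, once the orientation is fixed, a bounded excess flow is nothing but a real-valued edge assignment cut out by linear constraints, so convex combinations of valid assignments stay valid. Fix the orientation $D$, an $(r,\alpha)$-flow $f$ in $D$ and an $(s,\beta)$-flow $g$ in $D$. For a parameter $t\in[0,1]$ I would set $h_t = (1-t)f + tg$ and claim that $h_t$ is an $(r_t,\alpha_t)$-flow in $D$, where $(r_t,\alpha_t) = (1-t)(r,\alpha) + t(s,\beta)$; note that $(r_t,\alpha_t)$ is precisely the point sweeping out the segment between $(r,\alpha)$ and $(s,\beta)$ as $t$ ranges over $[0,1]$.

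The verification splits into the two defining conditions. For the edge-range condition: since $1\le f(e)\le r-1$ and $1\le g(e)\le s-1$ for every $e\in E$, taking the convex combination gives $1\le h_t(e)\le (1-t)(r-1)+t(s-1) = r_t-1$, so $h_t:E\to[1,r_t-1]$. For the vertex-excess condition: because $E^+(x)$ and $E^-(x)$ depend only on $D$ and not on the particular flow, the net outflow of $h_t$ at a vertex $x$ equals $(1-t)\bigl(\sum_{e\in E^+(x)}f(e)-\sum_{e\in E^-(x)}f(e)\bigr) + t\bigl(\sum_{e\in E^+(x)}g(e)-\sum_{e\in E^-(x)}g(e)\bigr)$, whose absolute value is at most $(1-t)\alpha + t\beta = \alpha_t$ by the triangle inequality. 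Finally $r_t\ge 2$ and $\alpha_t\ge 0$ (as $r,s\ge 2$ and $\alpha,\beta\ge 0$), so $(r_t,\alpha_t)$ is an admissible parameter pair. Hence $(r_t,\alpha_t)\in bed(D)$ for every $t\in[0,1]$, i.e. the whole segment lies in $bed(D)$; and $bed(D)\subseteq bed(G)$ by definition, which completes the argument.

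I expect no real obstacle here: the only point requiring care is that both constraints behave well under convex combination, and this works precisely because the orientation is held fixed, so "which edges leave $x$" is the same for $f$ and for $g$. If the orientations were allowed to differ, the net excess would no longer combine linearly, and indeed $bed(G)$ need not be convex — which is exactly why the lemma is stated for a common orientation $D$.
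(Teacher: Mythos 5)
Your proof is correct and takes a genuinely different route from the paper. The paper does not give an explicit proof of the lemma at all; it observes, in the preceding paragraph, that once the (balanced) orientation $D$ is fixed, each set $A\subseteq V$ contributes via Condition~2 of Theorem~\ref{first} a linear inequality in $(r,\alpha)$, so $bed(D)$ is an intersection of half-planes and hence convex, and the lemma is then labeled as immediate. You instead argue directly on the flows themselves: you form the convex combination $h_t=(1-t)f+tg$ and verify by hand that it satisfies the edge-range and excess constraints for the interpolated parameters $(r_t,\alpha_t)$. This is more elementary and self-contained — it avoids invoking Theorem~\ref{first} entirely and thus does not even need $D$ to be balanced or $\alpha,\beta<3$, and it also produces an explicit flow witnessing membership of each interior point of the segment in $bed(D)$. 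The paper's abstract half-plane view has its own payoff in context, since it is what later lets the author reason about $bed(D)$ as a convex polygonal domain with finitely many sides; your argument establishes the convexity itself but would not by itself yield the polygonal structure. Both are valid; yours is the more hands-on.
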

In particular:
\begin{lemma} \label{21}
Every balanced orientation of every cubic graph $G$ admits a
$(2,1)$-flow.
\end{lemma}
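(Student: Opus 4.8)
The plan is to verify that a $(2,1)$-flow exists in every balanced orientation $D$ of a cubic graph $G$ by invoking Theorem~\ref{first} with $r=2$ and $\alpha=1$. Since $D$ is balanced, the associated bisection $\Psi(v)=d^+(v)$ is by definition orientable, so by the last sentence of Theorem~\ref{first} Condition~1 is automatically satisfied and we need only check Condition~2 for this particular bisection, namely that for every $A\subseteq V$,
\[
1 \;\geq\; \frac{2d(A)-(d(A)-\Delta(A))\cdot 2}{2|A|} \;=\; \frac{\Delta(A)}{|A|}.
\]
So the whole statement reduces to the purely combinatorial inequality $\Delta(A)\le |A|$ for every vertex set $A$.

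First I would observe that this inequality is immediate: $\Delta(A)=\bigl||V_2\cap A|-|V_1\cap A|\bigr|\le |V_2\cap A|+|V_1\cap A| = |A|$, with no use of cubicity or of the edge structure at all. Hence Condition~2 holds for $r=2$, $\alpha=1$ and for every $A$, and Theorem~\ref{first} (whose hypothesis $0\le\alpha<3$ is met) yields an orientation of $G$ realizing the bisection $(V_1,V_2)$ together with a $(2,1)$-flow. Since the orientation produced is exactly the balanced orientation $D$ we started from — Theorem~\ref{first}'s ``if'' direction builds the network $\bar G$ on top of the orientation guaranteed by Lemma~\ref{orientable}, which here is $D$ itself — the flow lives in $D$, as claimed.

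Alternatively, and perhaps more transparently, one can exhibit the flow directly: in a balanced orientation every vertex has in-degree and out-degree in $\{1,2\}$, so assigning $f(e)=1$ to every edge gives $f:E\to[1,1]\subseteq[1,r-1]$ with $r=2$, and the excess at each vertex is $|d^+(v)-d^-(v)|=|{\pm1}|=1\le\alpha$. I would present this one-line construction as the proof and relegate the Theorem~\ref{first} derivation to a remark, since the direct argument makes clear that the statement is essentially a restatement of what ``balanced'' means. There is no real obstacle here; the only point worth a sentence is confirming that the all-ones assignment indeed falls in the half-open interval $[1,r-1]$ when $r=2$, i.e. that the degenerate interval $[1,1]$ is allowed, which it is under the Definition's convention $f:E\to[1,r-1]$.
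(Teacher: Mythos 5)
Your direct construction ($f(e)=1$ on every edge, giving excess $|d^+(v)-d^-(v)|=1$ at each vertex of a balanced orientation) is exactly the paper's one-line proof, and you are right to prefer it. The detour through Theorem~\ref{first} is also valid — the algebra reduces Condition~2 to the trivial $\Delta(A)\le|A|$ — but it is unnecessary machinery here, and your parenthetical about the orientation being ``$D$ itself'' would need the small extra remark that $d^+(A)-d^-(A)=\delta(A)$ holds for \emph{any} orientation realizing the bisection, so the flow-network argument can indeed be run on the given $D$.
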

\begin{proof} $f(e)=1$ for every edge $e$ clearly does the job.
\end{proof}

The points on the line defined by $(r_0,\alpha_0)$ and $(2,1)$ are
characterized by their:
\begin{definition}\label{trace}
The {\bf Trace} of a point $(r_0,\alpha_0)$, $r_0\geq 2,
\alpha_0<1$ is defined as:
\[tr(r_0,\alpha_0)=\frac{r_0-2\alpha_0}{1-\alpha_0}.\] We also define
$tr(f)=tr(r_0,\alpha_0)$ when $f$ is an $(r_0,\alpha_0)$-flow. The
line through $(2,1)$ and $(r_0,\alpha_0)$ intersects with the
$r$-axis at $r=tr(r_0,\alpha_0)$. In particular, the trace of an
$r$-cnzf is $r$.

Notice that the line segment $L_k$ consists of all the points $p$
for which $tr(p)=k$.
\end{definition}

Lemma \ref{conv} implies:

\begin{lemma}\label{second}
Let $p=(r_0,\alpha_0)$ and $q=(r_1,\alpha_1)$ be two points on the
$r$-$\alpha$ plane, such that $tr(p)=tr(q)$ and
$\alpha_0\leq\alpha_1$ (equivalently $r_0\geq r_1$)  then
$p\preceq q$.
\end{lemma}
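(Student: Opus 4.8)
The plan is to deduce Lemma~\ref{second} directly from Lemma~\ref{conv} and Lemma~\ref{21}. Suppose a cubic graph $G$ admits a $p$-flow, i.e. $(r_0,\alpha_0)\in bed(G)$. Then there is a balanced orientation $D$ of $G$ with $(r_0,\alpha_0)\in bed(D)$ — balanced because $\alpha_0<1<3$, as in the proof of Theorem~\ref{first}. By Lemma~\ref{21}, this same orientation $D$ also admits a $(2,1)$-flow, so $(2,1)\in bed(D)$ as well. By Lemma~\ref{conv}, the whole segment joining $(r_0,\alpha_0)$ and $(2,1)$ lies in $bed(D)\subseteq bed(G)$.

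The key geometric observation is then that $q=(r_1,\alpha_1)$ lies on that segment. Indeed, $tr(p)=tr(q)$ means both $p$ and $q$ lie on the line through $(2,1)$ meeting the $r$-axis at $r=tr(p)=:k$; this is precisely the line containing $L_k$, by Definition~\ref{trace}. So $p$, $q$ and $(2,1)$ are collinear. The condition $\alpha_0\le\alpha_1$ (equivalently $r_0\ge r_1$, since along a line of fixed trace the two coordinates move oppositely) places $q$ between $p$ and $(2,1)$: parametrizing the segment from $(2,1)$ to $(r_0,\alpha_0)$, the $\alpha$-coordinate decreases monotonically from $1$ down to $\alpha_0$, and $\alpha_1\in[\alpha_0,1]$ forces $q$ onto the segment (one should note $\alpha_1<1$ so that $q\ne(2,1)$ is not at issue, and $\alpha_1\ge\alpha_0$ keeps it on the correct side). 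Hence $q\in bed(D)\subseteq bed(G)$, i.e. $G$ admits a $q$-flow. Since $G$ was an arbitrary cubic graph admitting a $p$-flow, this is exactly $p\preceq q$.

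There is essentially no hard step here — the lemma is an immediate corollary of convexity along a fixed orientation together with the universal $(2,1)$-flow. The only point requiring a line of care is the verification that $\alpha_0\le\alpha_1$ is the right inequality for ``$q$ between $p$ and $(2,1)$'' rather than ``$p$ between $q$ and $(2,1)$'': this follows because $\alpha=1$ at the endpoint $(2,1)$ and $\alpha=\alpha_0$ at the other endpoint $p$, so smaller $\alpha$ means farther from $(2,1)$, and $\alpha_1\in[\alpha_0,1]$ is exactly the in-between range. I would also remark in passing on the boundary cases: if $\alpha_1=\alpha_0$ (so $q=p$) the claim is trivial, and $\alpha_1<1$ is assumed so that the trace of $q$ is well-defined. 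No other subtlety arises.
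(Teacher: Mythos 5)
Your proof is correct and follows the same route the paper intends: the paper simply writes ``Lemma~\ref{conv} implies'' and leaves the details unstated, while you correctly fill them in by invoking Lemma~\ref{21} to get $(2,1)\in bed(D)$ for the same balanced orientation $D$, applying convexity of $bed(D)$, and checking geometrically that $q$ lies on the segment from $p$ to $(2,1)$. No gaps.
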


 Obviously if $r \leq s$ and $\alpha \leq
\beta$ then $(r,\alpha)\preceq(s,\beta)$. Combining with Lemma
\ref{second}, it yields:

\begin{theorem}\label{third}
$urd(r,\alpha)\subseteq span(r,\alpha)$.
\end{theorem}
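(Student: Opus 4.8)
The goal is to show that for any point $(r,\alpha)$ with $r\geq 2$ and $\alpha<1$, every point $q$ in the upper-right domain $urd(r,\alpha)$ satisfies $(r,\alpha)\preceq q$; that is, any cubic graph admitting an $(r,\alpha)$-flow also admits a $q$-flow. Recall that $urd(r,\alpha)$ is the closed unbounded polygon with vertices $(2,\infty)$, $(2,1)$, $(r,\alpha)$ and $(\infty,\alpha)$. The plan is to split a generic point $q=(s,\beta)$ of this domain into two cases according to whether it lies in the ``lower'' part (with $\beta\le\alpha$) or the ``upper'' part (with $\beta\ge\alpha$), and in each case exhibit a chain of the two already-established comparability relations: coordinatewise dominance ($r\le s$ and $\alpha\le\beta$ implies $(r,\alpha)\preceq(s,\beta)$, noted just before the statement) and the trace-monotonicity of Lemma \ref{second}.

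First I would handle the region of $urd(r,\alpha)$ with $\beta\ge\alpha$: this is exactly the set of points $(s,\beta)$ with $s\ge 2$ and $\beta\ge\alpha$, so $(r,\alpha)\preceq(s,\beta)$ is immediate from coordinatewise dominance (since $r$ is the smallest $r$-coordinate in play and $\alpha\le\beta$). Actually, since $r\ge 2$, the point $(s,\beta)$ need not dominate $(r,\alpha)$ in the $r$-coordinate, so here is where the trace argument enters: given $(s,\beta)$ with $\beta\ge\alpha$, consider the point $p'$ on the line through $(2,1)$ and $(r,\alpha)$ at height $\beta$; since $\alpha<1$ this line meets every horizontal level $\beta<1$ in a single point, and for $\beta\ge\alpha$ that point $p'$ has $r$-coordinate $\le r$, hence $tr(p')=tr(r,\alpha)$ and by Lemma \ref{second} $(r,\alpha)\preceq p'$. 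Then $p'$ has the same $\alpha$-coordinate as $(s,\beta)$ and a smaller $r$-coordinate (one checks $p'$ lies to the left of $(s,\beta)$ precisely because $(s,\beta)\in urd(r,\alpha)$ forces $s\ge 2$ and the polygon's left boundary is $r=2$), so $p'\preceq(s,\beta)$ by coordinatewise dominance, and transitivity of $\preceq$ finishes this case. For the remaining region, with $\alpha_0:=\alpha$ but $\beta<\alpha$ — wait, $urd(r,\alpha)$ has no points below height $\alpha$ except along the left edge $r=2$ up to $(2,1)$, and there $\beta$ ranges up to $1$, i.e. $\beta\ge\alpha$ again once $r=2$ unless $\alpha$ itself is small; in all cases the bounding vertices $(2,1)$ and $(r,\alpha)$ and $(\infty,\alpha)$ confirm every point of the domain has $\beta\ge\alpha$, so in fact the single argument above covers the whole domain.

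The main (and only real) obstacle is bookkeeping with the geometry of $urd(r,\alpha)$: one must verify carefully that every point of that polygon is reachable from $(r,\alpha)$ by first moving along the constant-trace line $L$ (using Lemma \ref{second}, valid only because $\alpha<1$ guarantees $L$ is not horizontal and crosses each level once) and then moving up-and-right (using coordinatewise monotonicity). Concretely I would argue: the edge from $(2,1)$ to $(r,\alpha)$ is a sub-segment of $L$, so all its points are $\succeq(r,\alpha)$ by Lemma \ref{second}; and any other point $q$ of the polygon dominates coordinatewise some point of this edge (its ``shadow'' obtained by moving left to $r=2$ or down to the edge). Chaining the two relations through that shadow point, together with transitivity of $\preceq$, yields $(r,\alpha)\preceq q$ for all $q\in urd(r,\alpha)$, which is the claim. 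No deep input is needed beyond Lemma \ref{second} and the trivial coordinatewise monotonicity; the content is purely the observation that $urd(r,\alpha)$ is exactly the ``$\preceq$-upward closure'' generated by those two moves.
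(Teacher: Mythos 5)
Your proof is correct and takes essentially the same approach as the paper, which dispatches the theorem by simply combining coordinatewise dominance ($r\le s$, $\alpha\le\beta$ implies $(r,\alpha)\preceq(s,\beta)$) with the trace-monotonicity of Lemma~\ref{second}. Your ``shadow'' argument in the final paragraph is the clean realization of that two-step decomposition (the earlier digression on $\beta\ge\alpha$ is a little muddled before it self-corrects, and points with $\beta\ge1$ implicitly need Lemma~\ref{21} for $(2,1)$ since the trace is undefined there, but the paper's own one-line proof is equally informal on that point).
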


\subsection{Bounded excess flows and $k$-weak bisections} The
following definition is taken from \cite{emt2}:

\begin{definition}\label{weak}
Let $k\ge 3$ be an integer. A bisection $(V_1,V_2)$ of a cubic
graph $G=(V,E)$ is a \textbf{$k$-weak bisection} if every
connected component of each of the two subgraphs of $G$, induced
by $V_1$ and by $V_2$,  is a tree on at most $k-2$ vertices. Such
a component, as well as any subgraph with all vertices in the same
set $V_i$, is referred to in the sequel as \textbf{monochromatic}.
\end{definition}

A $k$-{\bf strong} bisection is also defined  in \cite{emt2} and
it is known \cite{bvj} to be equivalent to a $k$-nzf.

The following theorem reflects a similar connection between {\bf
orientable} $k$-weak bisections and bounded excess flows:

\begin{theorem} \label{main}
 Let $G$ be a cubic graph and $k\geq 3$ an integer.
The following three statements are equivalent:
\begin{enumerate}
\item $G$ admits an orientable $k$-weak bisection.
\item $M_k \subseteq bed(G)$.
\item $G$ admits a bounded excess flow $f$ with $tr(f)<k+1$.
\end{enumerate}
\end{theorem}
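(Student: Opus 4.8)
Our plan is to establish the cycle of implications $(2)\Rightarrow(3)\Rightarrow(1)\Rightarrow(2)$. Throughout, Theorem~\ref{first} serves as the dictionary between bisections and bounded excess flows, and Lemmas~\ref{conv} and~\ref{21} let us reduce statements about the whole segment $M_k$ to its two endpoints, $(2,1)$ and $p_k:=\bigl(3+\tfrac{k-3}{k-1},\,\tfrac{k-3}{k-1}\bigr)$. A one-line computation records that $p_k\in L_k$, hence $tr(p_k)=k$, and that in general $\tfrac{r-2}{1-\alpha}=tr(r,\alpha)-2$; these two identities are used repeatedly.

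The implication $(2)\Rightarrow(3)$ is immediate: since $p_k$ is an endpoint of $M_k$ lying in it, $M_k\subseteq bed(G)$ gives $p_k\in bed(G)$, and a $p_k$-flow is then a bounded excess flow of trace $k<k+1$. For $(3)\Rightarrow(1)$, let $f$ be an $(r,\alpha)$-flow in an orientation $D$ of $G$ with $tr(f)<k+1$; since the trace is defined we have $\alpha<1<3$, so $D$ is balanced and, exactly as extracted in the proof of Theorem~\ref{first}, the bisection $\Psi(v)=d^+(v)$ satisfies Conditions~1 and~2 for the pair $(r,\alpha)$. First I would show that no monochromatic subgraph contains a cycle: if some $A\subseteq V$ induces a connected monochromatic subgraph with a cycle, then cubicity forces $d(A)\le|A|=\Delta(A)$, and plugging this into Condition~2 together with $r\ge2$ yields $\alpha\ge1$, a contradiction. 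So every monochromatic component is a tree; for such a tree $T$ on $t$ vertices cubicity gives $d(T)=t+2$ and $\Delta(T)=t$, whence Condition~2 reads $t\le\tfrac{r-2}{1-\alpha}=tr(f)-2<k-1$, and since $t$ is an integer, $t\le k-2$. Thus $\Psi$ is an orientable $k$-weak bisection.

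The implication $(1)\Rightarrow(2)$ carries the real work. Given an orientable $k$-weak bisection $(V_1,V_2)$, fix a balanced orientation $D$ realizing it (as produced in the proof of Theorem~\ref{first}). Then $(2,1)\in bed(D)$ by Lemma~\ref{21}, so by Lemma~\ref{conv} it suffices to prove $p_k\in bed(D)$; for this, Condition~1 of Theorem~\ref{first} holds because $(V_1,V_2)$ is orientable, and Condition~2 for $p_k$, after clearing denominators, becomes the purely combinatorial inequality
\[(k-2)\,d(A)\ \ge\ (2k-3)\,\Delta(A)-(k-3)\,|A|\qquad\text{for every }A\subseteq V.\]
To verify it, write $A=A_1\cup A_2$ with $A_i=A\cap V_i$. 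Each $G[A_i]$ is an induced subforest of the monochromatic forest $G[V_i]$, so if it has $c_i$ components then each has at most $k-2$ vertices, giving $(k-2)c_i\ge|A_i|$, and $G[A_i]$ has $|A_i|-c_i$ edges. Writing $m$ for the number of $A_1$--$A_2$ edges, cubicity yields $d(A)=|A|+2(c_1+c_2)-2m$ while $\Delta(A)=\bigl|\,|A_2|-|A_1|\,\bigr|$; substituting and simplifying, under the harmless assumption $|A_2|\ge|A_1|$, reduces the displayed inequality to
\[2(k-2)|A_1|+(k-2)(c_1+c_2)-(k-2)m\ \ge\ |A_2|,\]
which follows from $(k-2)c_2\ge|A_2|$ together with $m\le|A_1|+2c_1\le 2|A_1|+c_1$ --- the first bound because at most all $|A_1|+2c_1$ edges leaving $A_1$ can reach $A_2$, the second because $c_1\le|A_1|$.

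I expect the bookkeeping in that last step --- expressing $d(A)$ and $\Delta(A)$ through $|A_i|,c_i,m$ and isolating which inequalities are tight --- to be the main technical obstacle; everything else is either immediate or a one-line algebraic reduction, and the only genuinely new idea is the integer rounding $t<k-1\Rightarrow t\le k-2$ used in $(3)\Rightarrow(1)$, which is precisely what couples the ``continuous'' trace bound $k+1$ to the ``discrete'' bound $k-2$ on monochromatic trees.
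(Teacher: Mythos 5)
Your proof is correct and follows essentially the same route as the paper: translate both directions through Theorem~\ref{first}, reduce $M_k$ to the single point $p_k$ via convexity and the universal $(2,1)$-flow, and in the key step $(1)\Rightarrow(2)$ use cubicity and the fact that each monochromatic forest component has at most $k-2$ vertices to verify Condition~2, with $(3)\Rightarrow(1)$ obtained by the monochromatic-cycle/tree analysis. The only differences are cosmetic: you close the cycle in the opposite order and prove $(3)\Rightarrow(1)$ directly rather than by contrapositive, and your bookkeeping in $(1)\Rightarrow(2)$ tracks $c_1,c_2,m$ for both color classes while the paper bounds $d(A)$ from $d(A_2)$ alone via $d(A)\ge d(A_2)-3|A_1|$, but both reduce to the same inequality $(k-2)d(A)+(k-3)|A|\ge(2k-3)\Delta(A)$.
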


\begin{proof} $1\Rightarrow2$: Let $(V_1,V_2)$ be an orientable $k$-weak bisection of $G=(V,E)$ and let
$A\subseteq V$ be a set of vertices of $G$. Define $A_1=A\cap V_1$
and $A_2=A\cap V_2$. As $A_2$ induces a forest in a cubic graph,
$d(A_2)=|A_2|+2c$, where $c$ is the number of connected components
induced by $|A_2|$. Each such component has at most $k-2$ vertices
so $c\geq \frac {|A_2|}{k-2}$ and $d(A_2)\geq |A_2|+2\frac
{|A_2|}{k-2}= \frac k{k-2}|A_2|$. As $G$ is cubic at most $3|A_1|$
edges in $E(A_2)$ have their second endvertex in $A_1$.
Consequently
\[d(A) \geq \frac k{k-2}|A_2| - 3|A_1|\] We can assume $|A_2| \geq
|A_1|$ (otherwise exchange roles between $A_1$ and $A_2$), which
implies $\Delta(A)=\delta(A)$ so $|A_2|=\frac {|A|+\Delta(A)}2$
and $|A_1|=\frac {|A|-\Delta(A)}2$. When plugged into the last
inequality  it yields:

\[(k-2)d(A)+(k-3)|A|\geq(2k-3)\Delta(A)\]

We now divide by ${k-1}$ to get, after some manipulations:

\[\frac{k-3}{k-1}\geq \frac{2d(A) -(d(A)-\Delta(A))(3+\frac{k-3}{k-1})}{2|A|}\]

which is obtained from Condition 2 of Theorem \ref{first} with
$r=3+\frac{k-3}{k-1}$ and $\alpha=\frac{k-3}{k-1}$. Theorem
\ref{first} implies the existence of a
$(3+\frac{k-3}{k-1},\frac{k-3}{k-1})$-flow and by Lemma
\ref{second}, $M_k \subseteq bed(G)$.

 $2\Rightarrow3$: Points on $M_k$ are of trace $k<k+1$.

$3\Rightarrow1$: Assume an $(r_0,\alpha_0)$-flow in an orientation
of $G=(V,E)$. The partition $(V_1,V_2)$, where $V_i$, $i=1,2$ are
the sets of vertices with outdegree $i$, is clearly an orientable
bisection. If it is not a $k$-weak bisection then there exists a
monochromatic subgraph which is either a cycle on a set of
vertices $A$ where $d(A)=\Delta(A)=|A|$, or a tree on a set of
$k-1$ vertices $A$ where $d(A)=k+1$ and $|A|=\Delta(A)=k-1$.
Condition 2 of Theorem \ref{first} then either yields
$\alpha_0\geq 1$ which can be ignored, or

\[\alpha_0\geq \frac{2(k+1) - 2r_0}{2(k-1)}\] So $(r_0,\alpha_0)$ lies
on or on the right side of the line $\alpha=\frac{k+1-r}{k-1}$,
namely on or on the right side of  $L_{k+1}$, where the trace is
at least $k+1$. We proved $\neg 1\Rightarrow \neg 3$
\end{proof}

\subsection{Classification of some regions of the $r$-$\alpha$
plane} Properties of points in the labeled regions of the
$r$-$\alpha$ plane, depicted in Figure \ref{pos} can now be
deduce:
\begin{corollary}
No cubic graph admits an $(r,\alpha)$-flow $f$ with $tr(f)<3$
(that is $(r,\alpha)$ belongs to the region denoted by $O$ in
Figure \ref{pos})
\end{corollary}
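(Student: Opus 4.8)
The plan is to show that if a cubic graph $G$ admits an $(r,\alpha)$-flow $f$ then $tr(f) \geq 3$, i.e. the region $O$ below the line $L_3$ (equivalently, the line through $(2,1)$ and $(3,0)$) contains no realizable pair. The natural route is to invoke Theorem \ref{main} with $k=3$: statement 3 of that theorem (with $k=3$) says that $G$ admits a bounded excess flow of trace strictly less than $4$ if and only if $G$ admits an orientable $3$-weak bisection. But for $k=3$, a $3$-weak bisection requires every monochromatic component to be a tree on at most $k-2 = 1$ vertex — that is, every monochromatic component is a single vertex, which means the bisection must be a \emph{proper} $2$-coloring with the two color classes of equal size. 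So the real content is: we cannot push the trace all the way down to $3$ or below, because that would force something even stronger than a balanced proper $2$-coloring.

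Concretely, I would argue directly from Theorem \ref{first}. Suppose $f$ is an $(r,\alpha)$-flow in an orientation of $G$ with $\alpha < 3$ (which we may assume, since $\alpha \geq 3$ already gives trace far above $3$ for the relevant range, and in any case $\alpha<3$ is forced if we are trying to land in region $O$). Then $\alpha<3$ yields a balanced orientation and hence an orientable bisection $(V_1,V_2)$ satisfying Conditions 1 and 2. Now take $A = V$ itself. Since the orientation is balanced and $|V_1| = |V_2|$, we have $\Delta(V) = 0$ and $d(V) = 0$, which is uninformative — so instead I take $A$ to be a single vertex $v$. Then $d(A) = 3$, $\Delta(A) = 1$, $|A| = 1$, and Condition 2 gives $\alpha \geq \frac{2 \cdot 3 - (3-1)r}{2} = 3 - r$. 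So $\alpha \geq 3 - r$, i.e. $r + \alpha \geq 3$. Combined with $\alpha < 1$ (which holds for any point of trace $< 3$, since $L_3$ meets the line $\alpha = 1$ at $(2,1)$ and region $O$ lies strictly below $L_3$), a short computation shows $tr(r,\alpha) = \frac{r - 2\alpha}{1 - \alpha} \geq 3$: indeed $tr(r,\alpha) \geq 3 \iff r - 2\alpha \geq 3 - 3\alpha \iff r + \alpha \geq 3$, which is exactly what we derived. Hence no flow has trace $< 3$.

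The only subtlety — and the step to be careful with — is handling the case $\alpha \geq 1$ separately, since the Trace is only defined (Definition \ref{trace}) for $\alpha < 1$, and the region $O$ should be described as lying strictly on the origin-side of $L_3$. For $\alpha \geq 1$ the point $(r,\alpha)$ with $r \geq 2$ automatically lies on or above the horizontal line through $(2,1)$, hence outside $O$, so there is nothing to prove there. Thus the whole argument reduces to the single-vertex instance of Condition 2 of Theorem \ref{first}, and the main (very mild) obstacle is just bookkeeping the boundary conventions of the region $O$ and of the Trace. Alternatively, one can phrase the entire corollary as an immediate consequence of Theorem \ref{main} for $k=3$: a flow of trace $<3$ would in particular have trace $< 4 = k+1$, forcing an orientable $3$-weak bisection, whose monochromatic components are isolated vertices; but then applying Theorem \ref{first} to a single monochromatic vertex reproduces the inequality $r+\alpha \geq 3$, contradicting $tr(f) < 3$. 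I would present the direct single-vertex computation as the cleanest version.
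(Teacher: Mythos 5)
Your proposal is correct and uses essentially the same argument as the paper: apply Condition~2 of Theorem~\ref{first} to a singleton $A=\{x\}$ to obtain $\alpha \geq 3-r$, and observe that this is equivalent (for $\alpha<1$) to $tr(r,\alpha)\geq 3$. The extra bookkeeping you supply (the $\alpha\geq 1$ case and the explicit equivalence $tr(r,\alpha)\geq 3 \iff r+\alpha\geq 3$) is a sound elaboration of a step the paper leaves implicit, and the detour through Theorem~\ref{main} with $k=3$ is unnecessary, as you yourself conclude.
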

\begin{proof}
Condition 2 of Theorem \ref{first}, when applied to a singleton
$A=\{x\}$, where $d(A)=3$ and $|A|=\Delta(A)=1$, yields $\alpha
\geq 3-r$, that is $tr(r,\alpha)\geq 3$ No $(r,\alpha)$-flow
therefore exists with $tr(r,\alpha)<3$.
\end{proof}
On the other extreme end:
\begin{lemma}
Every cubic graph $G=(V,E)$ admits a $(2,1)$-flow (no trace is
defined for that point).
 \end{lemma}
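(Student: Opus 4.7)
The plan is to reduce the statement to Lemma~\ref{21}: once a balanced orientation of $G$ is in hand, the all-ones assignment $f(e)=1$ automatically lies in $[1,r-1]=[1,1]$ and produces excess exactly $|d^+(v)-d^-(v)|=1\le\alpha$ at every vertex, so it is already a $(2,1)$-flow. The entire content of the lemma is therefore to exhibit a balanced orientation of an arbitrary cubic graph $G$.

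For this I would use the standard Eulerian augmentation trick. Since $G$ is cubic, each of its connected components has an even number of vertices (the sum of degrees being $3$ times the vertex count and also twice the edge count). Working one component at a time, pair the vertices arbitrarily and add one dummy edge per pair (parallel edges allowed), obtaining a connected $4$-regular multigraph $G'$. All degrees in $G'$ are even and $G'$ is connected, so $G'$ admits an Eulerian circuit; orienting every edge in the direction of traversal yields $d^+(v)=d^-(v)=2$ for every vertex of $G'$. Removing the dummy edges then alters each vertex's outdegree by exactly $\pm 1$, leaving $d^+(v)\in\{1,2\}$ throughout $G$. This is a balanced orientation, and Lemma~\ref{21} completes the proof.

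No real obstacle arises; the only substantive claim beyond Lemma~\ref{21} is the existence of a balanced orientation, and the Eulerian construction takes a couple of lines. As an alternative route that stays entirely within the machinery already developed, one could apply Theorem~\ref{first} with $r=2$ and $\alpha=1$: Condition~$2$ reduces to $\Delta(A)\le|A|$, which is automatic, so a $(2,1)$-flow exists iff $G$ admits an orientable bisection, which by Lemma~\ref{orientable} is equivalent to the existence of a balanced orientation — supplied by the Eulerian argument. I would pick whichever phrasing the surrounding exposition favours; the Eulerian trick is shorter and self-contained.
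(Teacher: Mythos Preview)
Your proof is correct and follows essentially the same route as the paper: reduce to Lemma~\ref{21} and produce a balanced orientation by adjoining a perfect matching to make the graph $4$-regular, taking an Eulerian orientation, and deleting the extra edges. The paper's version is terser (it simply says ``select an Eulerian orientation'' without isolating components), and one phrasing in your write-up is slightly off---removing the dummy edge changes the outdegree by $0$ or $-1$, not by $\pm 1$---but the conclusion $d^+(v)\in\{1,2\}$ is correct regardless.
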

 \begin{proof} Considering Lemma \ref{21} it suffices to prove the existence of a balanced orientation:
 Turn $G$ into a $4$-regular graph $H$ by inserting $\frac{|V|}2$ new
edges which form a perfect matching. Select an Eulerian
orientation of $H$ and remove the extra edges.
\end{proof}

\begin{corollary} \label{ak}
Every two points in the same triangle $A_k$ are equivalent.

\end{corollary}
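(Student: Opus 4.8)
I want to show that any two points $p, q \in A_k$ satisfy $p \preceq q$ and $q \preceq p$. The natural route is to sandwich the whole triangle $A_k$ between two equivalent "extreme" configurations, using the characterization already established in Theorem~\ref{main}. Recall $A_k$ is the half-open triangle with vertices $(3+\tfrac{k-3}{k-1},\tfrac{k-3}{k-1})$, $(2,1)$ and $(4,\tfrac{k-3}{k-1})$, with the segment $M_k$ (the lower-left edge) and the bottom edge included, but the upper-right edge excluded. The key observation is that $M_k \subseteq A_k$, and by Theorem~\ref{main} the condition "$M_k \subseteq bed(G)$" is equivalent to "$G$ admits an orientable $k$-weak bisection", which is in turn equivalent to "$G$ admits some bounded excess flow of trace $< k+1$". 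So $M_k$ is, in effect, a single equivalence class worth of strength: admitting a flow at any point of $M_k$ is equivalent to admitting a flow at every point of $M_k$.

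First I would verify the easy containment: every point $p \in A_k$ has trace strictly less than $k+1$. Indeed $A_k$ lies strictly below the line $\alpha = \tfrac{k-r+1}{k-1}$, i.e. to the left of $L_{k+1}$, where traces are $< k+1$ — this is immediate from the defining inequality $\alpha < \tfrac{k-r+1}{k-1}$ of $A_k$. Hence, by the implication $3 \Rightarrow 1 \Rightarrow 2$ of Theorem~\ref{main}, if $G$ admits a $p$-flow for some $p \in A_k$, then $M_k \subseteq bed(G)$; in particular $(3+\tfrac{k-3}{k-1},\tfrac{k-3}{k-1}) \in bed(G)$. Conversely, for the reverse direction I want: if $M_k \subseteq bed(G)$ then every point of $A_k$ lies in $bed(G)$. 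Here I would use convexity (Lemma~\ref{conv}): by $1 \Rightarrow 2$ of Theorem~\ref{main}, the proof actually produces a single balanced orientation $D$ admitting the corner flow $(3+\tfrac{k-3}{k-1},\tfrac{k-3}{k-1})$-flow; this same $D$ also admits the $(2,1)$-flow by Lemma~\ref{21}. Then $bed(D)$ is convex and contains both $(2,1)$ and the corner point, hence contains the entire segment $M_k$ joining them, and in fact contains the convex hull of $\{(2,1)\} \cup M_k$ together with everything up-and-to-the-right of it inside $A_k$ — one checks that $A_k$ is contained in $urd$ of its lower-left corner intersected with the relevant half-planes, so $A_k \subseteq bed(D)$. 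Actually the cleanest statement: $M_k \subseteq bed(D)$ and, since every point of $A_k$ is weakly up-and-to-the-right of some point of $M_k$, Theorem~\ref{third} (or directly the monotonicity $(r,\alpha)\preceq(s,\beta)$ when $r\le s, \alpha\le\beta$, applied inside the fixed orientation) gives $A_k \subseteq bed(D) \subseteq bed(G)$.

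Putting these together: for any $p, q \in A_k$, admitting a $p$-flow forces $M_k \subseteq bed(G)$ forces $q \in bed(G)$, so $p \preceq q$; symmetrically $q \preceq p$. Hence all points of $A_k$ are equivalent.

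**Where the work is.** The only genuinely non-routine point is the geometric claim that $A_k \subseteq urd(c_k)$ where $c_k = (3+\tfrac{k-3}{k-1},\tfrac{k-3}{k-1})$ is the lower-left corner — equivalently, that every point of the (half-open) triangle $A_k$ is coordinatewise $\geq$ some point of the segment $M_k$. This is where I must be careful about which edges of $A_k$ are included: the bottom edge runs from $c_k$ to $(4,\tfrac{k-3}{k-1})$ at constant height $\alpha = \tfrac{k-3}{k-1}$, so every such point is coordinatewise $\geq c_k$; and any interior point, or point of $M_k$, is either directly handled by Lemma~\ref{second} (same trace) or dominates a point of $M_k$. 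I expect this to reduce to a short check that the slope of $M_k$ (namely $-\tfrac{1}{k-2}$) together with the constraint $2 < r < 4$ keeps the triangle "above and to the right" of $M_k$ in the partial order sense. I would also double-check the boundary bookkeeping: the excluded upper-right edge of $A_k$ lies on $L_{k+1}$ where trace $= k+1$, which is exactly why it must be excluded for the argument (there the flow would be of trace $k+1 \not< k+1$, and Theorem~\ref{main} no longer applies) — so the half-openness of $A_k$ is not a blemish but precisely what the proof needs.
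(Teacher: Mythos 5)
Your proof is correct and follows essentially the same route as the paper's: the key two facts are $A_k\subseteq urd(c_k)$ (giving $c_k\preceq p$ for all $p\in A_k$ via Theorem~\ref{third}) and $tr(p)<k+1$ for all $p\in A_k$ (giving $p\preceq c_k$ via Theorem~\ref{main}), so every point of $A_k$ is equivalent to the corner $c_k=(3+\tfrac{k-3}{k-1},\tfrac{k-3}{k-1})$. The paper's version is shorter because it invokes Theorems~\ref{third} and~\ref{main} directly without the detour through convexity and a fixed orientation, which your argument also ultimately sidesteps in favour of the coordinatewise-domination observation.
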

\begin{proof}
Let $p$ be a point in $A_k$. By Theorem \ref{third},
$(3+\frac{k-3}{k-1},\frac{k-3}{k-1})\preceq p$. Clearly
$tr(p)<k+1$ so by Theorem \ref{main}, $p \preceq
(3+\frac{k-3}{k-1},\frac{k-3}{k-1})$. All points of $A_k$ are then
equivalent to $(3+\frac{k-3}{k-1},\frac{k-3}{k-1})$ and
consequently also to each other.
\end{proof}

\begin{figure}[htb]
\centering \includegraphics[width=14cm]{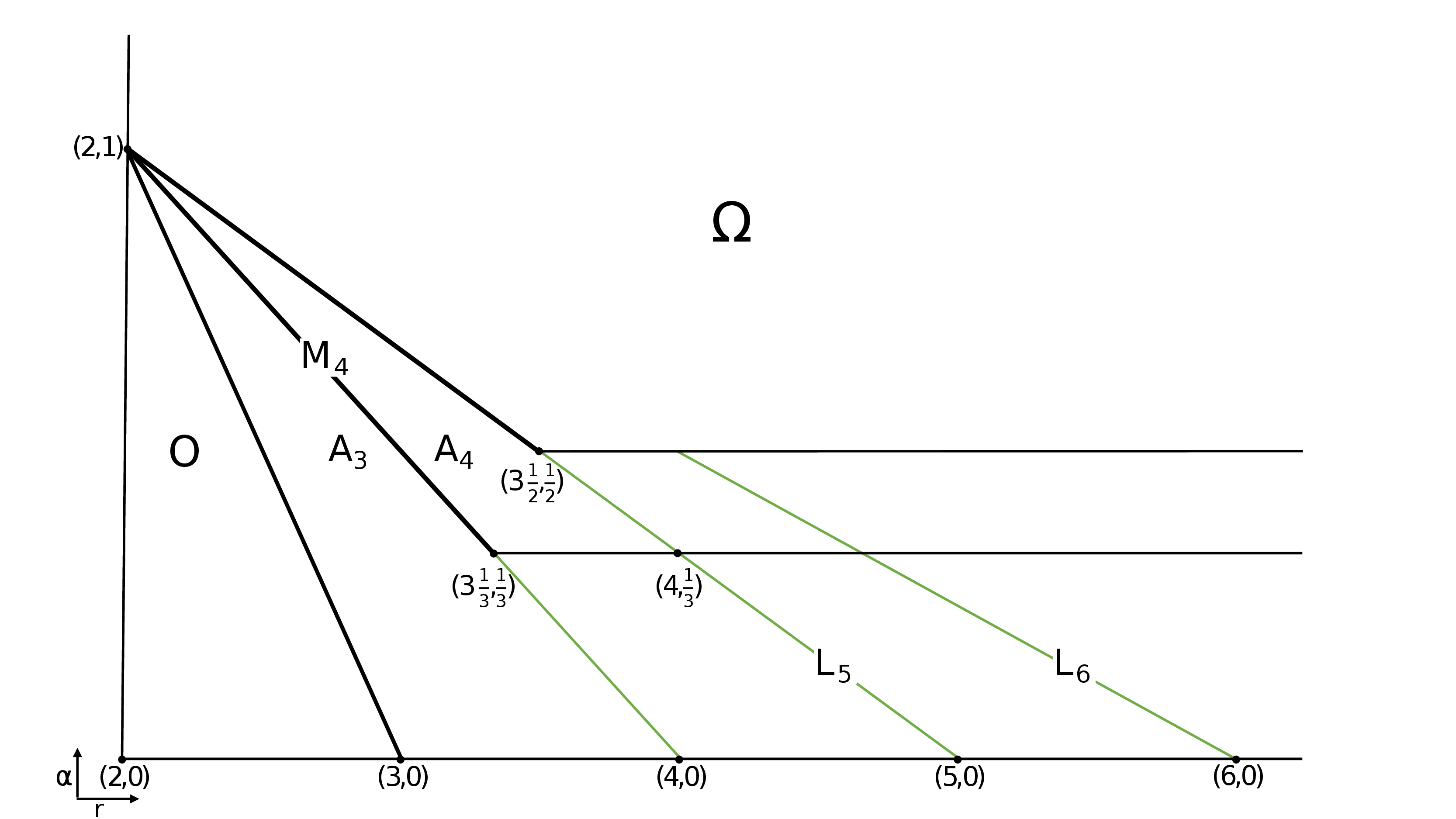} \caption{The
strength poset of $(r,\alpha)$-flows}\label{pos}
\end{figure}

Let us remark that every point $p$ such that $k\leq tr(p)<k+1$ is
at least as strong, but in general not equivalent to the points of
$A_k$. Furthermore:
\begin{theorem}\label{inf}
There are infinitely many non-equivalent points in the
$r$-$\alpha$ plane.
\end{theorem}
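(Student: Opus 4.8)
The plan is to produce an infinite sequence of points forming a (weakly) increasing chain for $\preceq$ in which infinitely many consecutive steps are strict; the classes along such a chain are totally ordered, so infinitely many strict steps yield infinitely many pairwise non-equivalent points. The increasing part is free: by Lemma \ref{second} any two points of a fixed trace line, say $L_4$ (trace $4$), are comparable, the one of smaller $\alpha$ being at least as strong. So fix points $p_n=(4-2\alpha_n,\alpha_n)\in L_4$ with $0\le\alpha_1<\alpha_2<\cdots<1$; then $p_n\preceq p_{n+1}$ for all $n$, and the theorem reduces to: for infinitely many $n$ there is a cubic graph $G_n$ that admits the flow $p_{n+1}$ but not the (stronger) flow $p_n$, so that $p_{n+1}\not\preceq p_n$ and the step $p_n\preceq p_{n+1}$ is strict.

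For a cubic graph with a prescribed bisection, Theorem \ref{first} turns admissibility of an $(r,\alpha)$-flow into finitely many linear inequalities indexed by the edge-cuts $E(A)$; the decisive ones come from small sets $A$, and for each such $A$ the quantity $\frac{2d(A)-(d(A)-\Delta(A))r}{2|A|}$ is exactly the least $\alpha$ (at that $r$) the cut can tolerate. The idea is to build $G_n$ from a fixed non-$3$-edge-colourable core (a copy of the Petersen graph, say), attached through small edge-cuts to a controlled number of gadgets, so arranged that every \emph{orientable} bisection of $G_n$ is forced to be ``unbalanced'' on some small cut $A$ with, at $r=4-2\alpha$, the resulting lower bound on $\alpha$ falling in $(\alpha_n,\alpha_{n+1}]$ and in no narrower interval. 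One then checks, using Theorem \ref{first} and Lemma \ref{orientable} (the latter to rule out a competing orientable bisection doing better), that $bed(G_n)$ meets $L_4$ in exactly the half-open subsegment of $L_4$ running from $p_{n+1}$ towards $(2,1)$; in particular $G_n$ admits $p_{n+1}$ but not $p_n$.

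The main obstacle is precisely this construction: one has to produce graphs whose trace-$4$ threshold can be set in arbitrarily fine increments — plausibly by inflating a ``distance from $3$-edge-colourability'' invariant such as oddness — while certifying via Lemma \ref{orientable} and Theorem \ref{first} that no orientable bisection balances the core away. Everything else is routine given the results already in hand: the reduction to a chain, the monotonicity along $L_4$, and the translation between flows and bisections. I would also point out that one cannot shortcut this by staying on the axis $\alpha=0$: a point $(r,0)$ is admitted by a cubic graph exactly when its circular flow number is at most $r$, and only the values $3$, $4$ and $5$ are known to be attained by circular flow numbers of cubic graphs, so the circular-nowhere-zero-flow points alone would yield merely finitely many classes — the $\alpha>0$ gadgets are essential.
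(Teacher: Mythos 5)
Your proposal does not actually complete the proof: the chain reduction and the monotonicity along $L_4$ are fine, but the entire content of the argument — the construction of graphs $G_n$ whose trace-$4$ threshold can be tuned to arbitrarily fine increments of $\alpha$ — is left as an acknowledged "main obstacle." As written, this is a plan rather than a proof, and it is not at all clear that the gadget construction you sketch (inflating oddness around a Petersen core and certifying via Lemma \ref{orientable} that no competing orientable bisection does better) can be made to work or to produce an infinite strictly increasing family.

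More seriously, the factual claim you use to justify this detour is wrong, and it is precisely the claim that the paper's one-line proof refutes. You assert that "only the values $3$, $4$ and $5$ are known to be attained by circular flow numbers of cubic graphs," so that the $\alpha=0$ axis yields only finitely many classes. But the paper's bibliography contains Lukot'ka and Skoviera \cite{lokut}, who prove that \emph{every} rational in $[4,5]$ is the circular flow number of some cubic graph (indeed a snark). The paper's proof of Theorem \ref{inf} is exactly the shortcut you dismiss: take any two distinct points $(r_1,0)$ and $(r_2,0)$ with $4\le r_1<r_2\le 5$, pick a rational $r_0\in(r_1,r_2)$ and a cubic $G$ with $\phi_c(G)=r_0$; then $G$ admits an $(r_2,0)$-flow but not an $(r_1,0)$-flow, so $(r_2,0)\not\preceq(r_1,0)$. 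This gives a continuum of pairwise non-equivalent points on the segment from $(4,0)$ to $(5,0)$ with no new construction needed. Your $\alpha>0$ machinery is thus both unnecessary and, in the form presented, incomplete.
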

\begin{proof}
For every rational number $r_0$ in [4,5] There exists a cubic
graph $G$ with $\phi_c(G)=r_0$ \cite{lokut}. As a result, no two
points on the line segment from $(4,0)$ to $(5,0)$ (rational, or
not) are equivalent.
\end{proof}

Another characteristic of the $\preceq$ order is that it is a
proper partial order (not a full order).

\begin{theorem}
There are two points $p$ and $q$ such that neither $p\preceq q$
nor $q \preceq p$.
\end{theorem}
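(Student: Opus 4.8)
The natural candidates are a point of low trace but with $\alpha$ forced to be large, versus a point of higher trace but with $\alpha=0$ (a pure circular flow). Concretely, I would try to show that $p=(3+\tfrac{k-3}{k-1},\tfrac{k-3}{k-1})$ (a point on $L_k$, hence of trace $k$) and $q=(k,0)$ (also on $L_k$, trace $k$) are incomparable for a suitable $k$, say $k=4$. By Lemma~\ref{second} we already know $q\preceq p$ holds if and only if we are comparing along the trace line in the "smaller $\alpha$" direction; what is needed is that $p\not\preceq q$, i.e. there is a cubic graph admitting the bounded-excess flow $p$ but \emph{not} the circular flow $q$, and simultaneously $q\not\preceq p$, i.e. a cubic graph admitting a $k$-cnzf but no $(3+\tfrac{k-3}{k-1},\tfrac{k-3}{k-1})$-flow.

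**The second half is almost immediate.** By Theorem~\ref{main}, admitting a point on $M_k$ (in particular the point $p$) is equivalent to admitting an orientable $k$-weak bisection, which forces trace $<k+1$. But a graph with circular flow number strictly between $k-1$ and $k$ — which exists by the Lovász–Kutín type result cited in the proof of Theorem~\ref{inf} (\cite{lokut}) — admits a $k$-cnzf (trace $k$, so it satisfies $q$) yet, if one chooses $\phi_c(G)$ close enough to $k$, one can check via Theorem~\ref{first} that it has no bounded-excess flow of trace $<k+1$ near $p$; more simply, one takes a graph whose $bed(G)$ does not reach $p$. So I would pick $G_1$ with $\phi_c(G_1)$ equal to or just below $k$ but whose bounded-excess domain excludes $p$, giving $q\not\preceq p$. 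Actually the cleanest route: the Petersen graph has $\phi_c=5$, so it admits no $k$-cnzf for $k<5$; and for $k=5$ one uses a graph on the segment $(4,5)$ from Theorem~\ref{inf}. I would select the specific $k$ and specific graph to make both directions fail.

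**For the first half — $p\not\preceq q$ — I want a cubic graph that has an orientable $k$-weak bisection but no $k$-cnzf.** Here the Petersen graph is the textbook example: it admits no $4$-cnzf and no $5$-cnzf is... wait, it admits a $5$-cnzf but no $4$-cnzf. So with $k=4$: Petersen has no $4$-cnzf, hence does not satisfy $q=(4,0)$. Does Petersen admit an orientable $4$-weak bisection? By Theorem~\ref{main} this is equivalent to Petersen admitting a bounded excess flow of trace $<5$, equivalently $M_4\subseteq bed(\text{Petersen})$. This is exactly the kind of statement the paper's main theorem (every cubic graph admits a $(3\tfrac12,\tfrac12)$-flow, trace $5$) is \emph{adjacent} to — but $5\not<5$, so I cannot quote it directly. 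I would instead verify directly, using Theorem~\ref{first}, that Petersen admits a $(3+\tfrac13,\tfrac13)$-flow (the point at the top of $M_4$ with $r=3+\tfrac13$, $\alpha=\tfrac13$, trace $4$): exhibit a balanced orientation / orientable $4$-weak bisection of Petersen and check Conditions 1 and 2. Petersen is vertex-transitive and edge-transitive, so a symmetric bisection into two $5$-vertex sets inducing forests of small components is easy to describe, and Condition 2 reduces to checking finitely many edge-cuts.

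**The main obstacle is the first half:** producing an explicit orientable $4$-weak bisection of the Petersen graph (or whichever graph I settle on) and verifying Condition~2 of Theorem~\ref{first} for the worst-case vertex sets $A$ — essentially confirming $\phi_c$-independence by a matching/cut argument. I expect this to be a short but genuinely case-based computation; the rest (the $q\not\preceq p$ direction and assembling the two graphs into the final conclusion "neither $p\preceq q$ nor $q\preceq p$") follows formally from Theorems~\ref{main} and~\ref{inf}. If the explicit Petersen computation proves annoying, the fallback is to invoke the known fact \cite{emt2} that the Petersen graph has a $4$-weak bisection together with a parity/orientability check, or to pick $k=6$ where the paper's main theorem \emph{does} apply (trace $5<6$) and pair a graph of circular flow number $6$ against $p$ on $L_6$.
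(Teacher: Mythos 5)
Your proposal has a fatal structural flaw that will prevent it from yielding incomparability, independent of the details you would fill in.

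You propose taking both $p$ and $q$ on the same trace line $L_k$ (with $k=4$: $p=(3\tfrac13,\tfrac13)$ and $q=(4,0)$, both of trace $4$). But Lemma~\ref{second} says precisely that two points with the same trace are always comparable: the one with smaller $\alpha$ (larger $r$) is stronger, so $q=(4,0)\preceq p=(3\tfrac13,\tfrac13)$ holds unconditionally. You even acknowledge in passing that ``$q\preceq p$ holds,'' yet then declare that you also need $q\not\preceq p$ — a contradiction. No amount of graph-theoretic case analysis will make two points of the same trace incomparable. To exhibit an incomparable pair you must pick points of \emph{different} traces whose $urd$-regions are genuinely crossing; the paper uses $(5,0)$ (trace $5$, $\alpha=0$) against $(3\tfrac13,\tfrac13)$ (trace $4$, $\alpha>0$), which neither Lemma~\ref{second} nor the monotone $(r,\alpha)$ comparison resolves.

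There is a second, independent error: you plan to show $p\not\preceq q$ by exhibiting the Petersen graph as one admitting an orientable $4$-weak bisection but no $4$-nzf. The Petersen graph does \emph{not} admit any $4$-weak bisection (orientable or otherwise); this is stated in the paper and is exactly the fact the paper uses, in the \emph{opposite} direction: Petersen admits $(5,0)$ but fails $(3\tfrac13,\tfrac13)$, showing $(5,0)\not\preceq(3\tfrac13,\tfrac13)$. For the other direction the paper uses a cubic graph with a bridge that admits an orientable $4$-weak bisection; having a bridge, it admits no $r$-cnzf at all, so $(3\tfrac13,\tfrac13)\not\preceq(5,0)$. You should replace both the choice of points and the role assigned to the Petersen graph before attempting any of the computations you describe.
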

\begin{proof}
The Petersen graph admits a $5$-nzf, that is, a $(5,0)$-flow, but
it does not admit a $4$-weak bisection (see e.g. \cite{emt2}) and
therefore neither it admits a $(3\frac13,\frac13)$-flow (Theorem
\ref{main}). On the other hand take any cubic graph with a bridge
which does admit an orientable $4$-weak bisection and hence also a
$(3\frac13,\frac13)$-flow. Since the graph has a bridge it does
not admit any $r$-cnzf, in particular not a $5$-nzf. An example of
such a graph is presented by the diagram at the left side of
Figure \ref{butter2}. We believe (Conjecture \ref{bl3}) that any
cubic graph with a bridge which admits a perfect matching will do
as well.
\end{proof}

\subsection{Some noteworthy Instances and Corollaries} The
following is an explicit formulation of Lemma \ref{second} where
$p=(r_0,\alpha_0)$ are the points $(k,0)$, $k\in\{3,4,5,6\}$:

\begin{corollary} \label{3456}
\leavevmode
\begin{itemize}
\item A cubic graph is bipartite if and only if it admits a
$3$-nzf and therefore, an $(r,3-r)$-flow for every $r$, $2\leq r
\leq 3$.
\item
A cubic graph is $3$-edge-colorable if and only if it admits a
$4$-nzf and therefore, an $(r,\frac{4-r}{2})$-flow for every $r$,
$2\leq r \leq 4$.
\item
A cubic graph admits a $5$-nzf (every bridgeless cubic graph if
the assertion of the $5$-flow conjecture \cite{5flow} holds) if
and only if  it admits an $(r,\frac{5-r}{3})$-flow for every $r$,
$2\leq r \leq 5$.
\item
Every bridgeless cubic graph admits a $6$-nzf \cite{seym6} and
therefore an $(r,\frac{6-r}{4})$-flow for every $r$, $2\leq r \leq
6$.
\end{itemize}
\end{corollary}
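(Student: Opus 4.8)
The plan is to read every item off Lemma~\ref{second} (equivalently Theorem~\ref{third} restricted to a trace line), feeding it the classical characterizations of small nowhere-zero flows in cubic graphs. The uniform mechanism: a $k$-nzf is precisely a $(k,0)$-flow, and $(k,0)$ is the right endpoint of the segment $L_k$, which by Definition~\ref{trace} consists of all points of trace $k$. For any $r$ with $2<r\le k$ the point $\left(r,\tfrac{k-r}{k-2}\right)$ lies on $L_k$ and has $\alpha$-coordinate $\tfrac{k-r}{k-2}\ge 0$, so Lemma~\ref{second} gives $(k,0)\preceq\left(r,\tfrac{k-r}{k-2}\right)$; hence a cubic graph with a $k$-nzf admits an $\left(r,\tfrac{k-r}{k-2}\right)$-flow for every such $r$. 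The remaining value $r=2$ is the excluded endpoint $(2,1)$ of $L_k$, where the trace is undefined, but there the assertion is the universal $(2,1)$-flow established earlier in this section, so it holds for every cubic graph unconditionally.

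For the converse directions that upgrade ``therefore'' to ``if and only if'' in the first three items, I would simply specialize $r=k$: then $\left(k,\tfrac{k-k}{k-2}\right)=(k,0)$, so a graph admitting an $\left(r,\tfrac{k-r}{k-2}\right)$-flow for all $r\in[2,k]$ admits in particular a $(k,0)$-flow, i.e.\ a $k$-nzf. It then remains to quote the classical equivalences. For $k=3$: a cubic graph has a $3$-nzf iff it is bipartite --- orienting all edges from one colour class to the other makes every vertex a source or a sink, so the in/out excess is $\pm 3\equiv 0\pmod 3$; conversely a nowhere-zero $3$-flow forces every vertex to be a source or a sink, and the source/sink partition is a bipartition. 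For $k=4$: a cubic graph has a $4$-nzf iff it is $3$-edge-colourable (Tutte's correspondence between a nowhere-zero $\mathbb{Z}_2\times\mathbb{Z}_2$-flow and a proper $3$-edge-colouring). Plugging these in yields items~1 and~2, and item~3 is the same statement with $k=5$, where one also records that every bridgeless cubic graph has a $5$-nzf provided the $5$-flow conjecture~\cite{5flow} holds.

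Item~4 is the one-directional version with $k=6$: by Seymour's $6$-flow theorem~\cite{seym6} every bridgeless cubic graph admits a $6$-nzf, hence a $(6,0)$-flow, and Lemma~\ref{second} propagates it along $L_6$ to $\left(r,\tfrac{6-r}{4}\right)$ for $2<r\le 6$, with $r=2$ again covered by the universal $(2,1)$-flow. There is no genuine obstacle here; the only point demanding care is the bookkeeping that $\tfrac{k-r}{k-2}$ specializes, for $k=3,4,5,6$, to the exponents $3-r$, $\tfrac{4-r}{2}$, $\tfrac{5-r}{3}$, $\tfrac{6-r}{4}$ appearing in the four bullets, together with keeping straight which assertions are unconditional and which are contingent on the $5$-flow conjecture.
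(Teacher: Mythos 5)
Your proposal is correct and matches the paper's intent exactly: Corollary~\ref{3456} is presented there as an instantiation of Lemma~\ref{second} at $p=(k,0)$ for $k\in\{3,4,5,6\}$, combined with the classical characterizations of $3$-, $4$-, $5$-, and $6$-nzf's in cubic graphs, which is precisely the route you take. Your extra care at the endpoint $r=2$ (falling back on the universal $(2,1)$-flow since $(2,1)$ is excluded from $L_k$ and has undefined trace) is a fair observation but not a deviation, merely a detail the paper leaves implicit.
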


 The following simple instance of Theorem \ref{main} applies to
 a significant family of Vizing's class two cubic graphs, which
 includes all "Classical Snarks" except for the Petersen graph (see e.g.
\cite{emt}).

\begin{corollary}
\label{k=4} If the circular flow number $\phi_c(G)$ of a
bridgeless cubic graph $G=(V,E)$ is strictly smaller than $5$ then
$G$ admits a $(3\frac 13,\frac 13)$-flow.
\end{corollary}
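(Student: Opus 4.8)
\emph{Proof proposal.} The plan is to obtain this as an immediate consequence of Theorem \ref{main} applied with $k=4$. The starting observation is that the circular flow number of a bridgeless graph is actually \emph{attained}: $G$ admits a $\phi_c(G)$-cnzf, and this is a rational number. (This is where bridgelessness is genuinely used, since a bridge cannot carry an $(r,0)$-flow, so $\phi_c$ is finite only for bridgeless graphs.) Hence the hypothesis $\phi_c(G)<5$ provides an $r$-cnzf of $G$ for some $r<5$, that is, using the identification of $r$-cnzf's with $(r,0)$-flows recorded in the Preliminaries, an $(r,0)$-flow $f$ with $r<5$.

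Next I would compute $tr(f)$. By Definition \ref{trace}, $tr(r,0)=\frac{r-2\cdot 0}{1-0}=r<5$, so $G$ admits a bounded excess flow $f$ with $tr(f)<5=k+1$ for $k=4$. This is exactly statement 3 of Theorem \ref{main}, and since the three statements there are equivalent we get statement 2, namely $M_4\subseteq bed(G)$. It then remains to note that $(3\frac13,\frac13)$ is the included lower endpoint of $M_4$: with $k=4$ one has $3+\frac{k-3}{k-1}=3+\frac13=3\frac13$ and $\frac{k-3}{k-1}=\frac13$, and $M_k$ contains the point $\bigl(3+\frac{k-3}{k-1},\frac{k-3}{k-1}\bigr)$ by definition. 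Therefore $(3\frac13,\frac13)\in M_4\subseteq bed(G)$, i.e.\ $G$ admits a $(3\frac13,\frac13)$-flow, as claimed.

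There is no substantial obstacle in this argument; it is a genuine corollary of Theorem \ref{main}. The only points demanding a word of care are: (i) deducing from $\phi_c(G)<5$ an honest flow of trace strictly less than $5$ rather than merely an infimum, which is why attainment of the circular flow number is invoked; and (ii) the elementary bookkeeping that $(3\frac13,\frac13)$ lies on the closed end of $M_4$ and has trace exactly $4$, so that the conclusion extracted is the strongest one available from $M_4\subseteq bed(G)$ on $L_4$. (One may also remark, as elsewhere in the paper, that the conclusion $(3\frac13,\frac13)$-flow is not itself restricted to bridgeless graphs — only the hypothesis $\phi_c(G)<5$ forces bridgelessness.)
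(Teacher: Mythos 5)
Your proof is correct and follows essentially the same route as the paper: observe that an $r$-cnzf has trace $r$, obtain from $\phi_c(G)<5$ a flow of trace less than $5$, and apply Theorem \ref{main} with $k=4$ to land on $(3\frac13,\frac13)\in M_4$. The only difference is that you invoke attainment of the circular flow number, which is a clean way to proceed but is not strictly needed — strict inequality $\phi_c(G)<5$ already yields an $r$-cnzf with $r<5$ directly from the definition of infimum.
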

\begin{proof}
The trace of an $r$-cnzf is $r$. If $\phi_c(G)<5$ then $G$ admits
a flow $f$ with $tr(f)<5$ and Theorem \ref{main} applies.
\end{proof}

In the  following section we present the main result of this
article:

\section{Every cubic graph admits a $(3\frac12,\frac12)$-flow}\label{ikar}
\begin{theorem} \label{or5}
 Every cubic graph admits a $(3\frac12,\frac12)$-flow.
\end{theorem}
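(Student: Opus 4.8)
By Theorem~\ref{main} (with $k=4$), it suffices to show that every cubic graph admits an orientable $4$-weak bisection, that is, a bisection $(V_1,V_2)$ satisfying the orientability criterion $d(A)\ge\Delta(A)$ of Lemma~\ref{orientable}, such that every monochromatic component is a tree on at most two vertices --- i.e. each monochromatic component is a single vertex or a single edge. Equivalently, one wants a balanced orientation of $G$ together with a $2$-colouring $\Psi(v)=d^+(v)$ of $V$ that is balanced in size ($|V_1|=|V_2|$) and in which no monochromatic path has three vertices. The plan is to produce such an object directly, then invoke Theorem~\ref{main} to get a point of $M_4$, hence (taking the endpoint $(3\tfrac12,\tfrac12)$ of $M_4$, whose trace is $5$) a $(3\tfrac12,\tfrac12)$-flow.

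First I would reduce to bridgeless graphs: a bridge $e=uv$ splits $G$ into two halves, and since excess may flow across a bridge, one can handle the two sides separately and glue, so by induction on the number of bridges we may assume $G$ is $2$-edge-connected (and we may also assume $G$ is connected and has no trivial structure). Next, I would bring in a perfect matching: by Petersen's theorem every bridgeless cubic graph has a perfect matching $M$, whose complement is a disjoint union of cycles $\mathcal{C}$. This gives a natural framework for building the bisection: on each cycle of $\mathcal{C}$ we must choose the colours of its vertices, and the matching edges must then be arranged so that the no-long-monochromatic-path condition and the size-balance condition both hold. The key local fact is that a cycle coloured alternately $1,2,1,2,\dots$ (possible when the cycle is even) contributes no monochromatic edge within the cycle at all; for odd cycles one is forced to create exactly one monochromatic edge, and one must check this edge does not extend, via a matching edge, into a monochromatic path on three vertices.

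The main work, and the main obstacle, is to make these local choices globally consistent: simultaneously achieving (i) the forest/short-component condition, (ii) the global balance $|V_1|=|V_2|$, and (iii) orientability $d(A)\ge\Delta(A)$ for \emph{all} $A$. I expect the cleanest route is to phrase the existence of the desired bisection as a flow/matching feasibility problem --- much as Theorem~\ref{first} encodes $(r,\alpha)$-flows via an auxiliary network --- and verify the Hall-type or Hoffman-type condition directly, using the cycle-plus-matching structure to bound $d(A)$ from below in terms of $\Delta(A)$ and $|A|$ (this is exactly the style of the counting done in the $1\Rightarrow2$ part of the proof of Theorem~\ref{main}, where $d(A)\ge\frac{k}{k-2}|A_2|-3|A_1|$ was extracted from the forest structure). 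Alternatively, one can try an inductive/contraction argument: find a small reducible configuration (e.g. a short cycle, two parallel edges, or a vertex of a controlled type), contract or delete it, apply the induction hypothesis to the smaller cubic graph, and show the bisection lifts back --- here the danger is that reassembling the pieces breaks either the balance condition or the short-component condition at the interface, so the reducible configurations must be chosen so that the interface edges can always be recoloured locally. Either way, the heart of the proof is a careful combinatorial case analysis guaranteeing that the unavoidable monochromatic edges (one per odd cycle of the $2$-factor complement) can be distributed so as never to link up into a monochromatic $P_3$ while keeping the two colour classes equinumerous, and I would expect that analysis --- rather than the invocation of Theorems~\ref{first} and~\ref{main} --- to occupy the bulk of the argument.

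(The sharpness claim, that some cubic graph admits no stronger bounded-excess flow, is a separate matter: by Theorem~\ref{main} applied with $k=4$ it amounts to exhibiting a cubic graph with no $4$-weak bisection at all, equivalently no flow of trace $<5$; the Petersen graph is the natural candidate, being the standard example of a cubic graph without a $4$-weak bisection, though one must also check it admits nothing in $urd$ strictly below $(3\tfrac12,\tfrac12)$.)
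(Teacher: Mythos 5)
Your proposal targets the wrong value of $k$: you invoke Theorem~\ref{main} with $k=4$, i.e.\ you set out to show every cubic graph has an orientable $4$-weak bisection (monochromatic components of at most two vertices). But the Petersen graph has no $4$-weak bisection at all --- a fact the paper states explicitly and which you yourself cite in your sharpness paragraph --- so that plan is dead on arrival. The paper instead proves an orientable \emph{$5$-weak} bisection (monochromatic trees on at most three vertices), which is what Theorem~\ref{main} with $k=5$ converts into membership of $M_5$, whose lower endpoint is $(3\tfrac12,\tfrac12)$. Your statement that ``the endpoint of $M_4$ is $(3\tfrac12,\tfrac12)$, whose trace is $5$'' is also incorrect: the endpoint of $M_4$ is $(3\tfrac13,\tfrac13)$, of trace $4$; $(3\tfrac12,\tfrac12)$ lies on $L_5$, not $L_4$. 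So the reduction step of your proof as written proves a false statement and, even were it true, would land on the wrong point.

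Beyond that mismatch, the rest of the sketch is far thinner than what the theorem requires. The bridge-splitting reduction is asserted, not argued; gluing across a bridge must preserve simultaneously the global size balance, the orientability inequalities $d(A)\ge\Delta(A)$ for all $A$, and the short-monochromatic-component property, and none of that is checked. The perfect-matching-plus-$2$-factor plan only applies once you are bridgeless (or at least have a perfect matching); the paper's Remark notes that a much shorter proof exists precisely for that restricted case, and the bulk of the paper's argument exists exactly because the general case does not reduce to it so simply. The paper's actual route is different in structure: it imports the spanning factor $F$ of paths and cycles with properties (1)--(5) from \cite{emt2} (Lemma~\ref{factor}), builds a tree-like skeleton $S$ over the $F$-components, decomposes $S$ into prime-even s-subgraphs and branches, and by an intricate induction (Lemma~\ref{nstep}) constructs an alternating, orientable coloring satisfying Rule~\ref{crit}, whence Claim~\ref{no4} bounds monochromatic components by three vertices. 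Your alternating-coloring-of-cycles intuition is in the right direction for the perfect-matching subcase, but it does not come close to covering the argument, and as stated the proposal would not yield a correct proof even after fixing $k$.
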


To prove Theorem \ref{or5} we have it restated by means of Theorem
\ref{main} as:

\begin{theorem} \label{t11}
Every cubic graph admits an orientable 5-weak bisection.
\end{theorem}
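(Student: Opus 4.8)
The plan is to prove Theorem~\ref{t11} by induction on $|V(G)|$, peeling off structure that lets us combine orientable $5$-weak bisections of smaller graphs. First I would record what we have to achieve: a bisection $(V_1,V_2)$ that is orientable (equivalently, by Lemma~\ref{orientable}, satisfies $d(A)\ge\Delta(A)$ for all $A$), and in which every monochromatic component is a tree on at most $3$ vertices---so only isolated vertices, monochromatic edges, and monochromatic paths of length $2$ are allowed. A natural reformulation: we must $2$-colour $V$ so that each colour class induces a linear forest with components of size $\le 3$, \emph{and} the induced balanced orientation exists. Since ``balanced orientation'' for a cubic graph is exactly a nowhere-zero flow condition modulo the colouring, the real content is the interplay between the forest-size constraint and the orientability constraint (*).

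The key structural reductions I would carry out, in order, are the standard ones for cubic-graph arguments. (i) Handle bridges: if $e=uv$ is a bridge, split $G$ into $G_1$ and $G_2$ at $e$, each made cubic by suppressing the degree-$1$ vertex or adding a small gadget; find orientable $5$-weak bisections of each, and then argue that one can glue them across $e$---here the freedom in the orientation lets the bridge ``carry excess'', exactly the phenomenon the introduction highlights, so the gluing is where orientability (as opposed to a genuine nzf) buys us something. (ii) Handle small edge cuts ($2$-cuts and $3$-cuts): contract one side to a vertex or to a triangle, invoke induction, and lift; a $3$-edge-cut in a cubic graph behaves well because contracting one side to a single vertex keeps the graph cubic. (iii) After these reductions we may assume $G$ is cyclically $4$-edge-connected (in particular $3$-edge-connected and bridgeless), which is the hard core.

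For the cyclically $4$-edge-connected case I would use a perfect matching: by Petersen's theorem every bridgeless cubic graph has a perfect matching $M$, and $G\setminus M$ is a disjoint union of cycles. The plan is to $2$-colour the cycles' vertices so that, first, along each cycle the colour classes are short monochromatic paths (breaking every cycle of $G\setminus M$ into blocks of $\le 3$ consecutive equal-coloured vertices is easy), and second, the matching edges do not create long monochromatic components when glued in---a matching edge joins two distinct cycle-vertices, so a monochromatic component of $(V_1,V_2)$ is built from cycle-path-fragments linked by monochromatic $M$-edges, and we must stop these from growing. This is where I expect to invoke (or re-derive, via Theorem~\ref{first} applied to the candidate bisection) a counting/flow argument: one shows the obstruction sets $A$ of Theorem~\ref{main}'s proof (monochromatic cycles, and monochromatic trees on $k-1=4$ vertices) can all be avoided. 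Equivalently, one directly builds a bisection satisfying Conditions 1 and 2 of Theorem~\ref{first} with $(r,\alpha)=(3\tfrac12,\tfrac12)$, using the matching-plus-cycles decomposition to control $d(A)$, $\Delta(A)$, $|A|$ simultaneously.

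The main obstacle, I expect, is exactly this last step: simultaneously controlling the monochromatic-component sizes \emph{and} orientability. Colouring the $2$-factor cycles to get short monochromatic runs is trivial in isolation, and orientability of a random-looking balanced colouring is generically fine, but the matching edges can conspire to either (a) link several length-$\le3$ fragments into a long monochromatic path or tree, or (b) destroy condition (*) on some small edge-cut. So the crux is a careful, possibly case-analytic or discharging, choice of the cycle-colouring relative to $M$---perhaps choosing $M$ and the $2$-factor advantageously, or recolouring locally around bad matching edges---to kill both failure modes at once. I would present the bridge and small-cut reductions first (they are routine given the ``bridges may carry excess'' remark), isolate the cyclically $4$-edge-connected case, and devote the bulk of the argument to the matching-and-$2$-factor colouring, verifying Conditions 1 and 2 of Theorem~\ref{first} at $(3\tfrac12,\tfrac12)$ on the resulting bisection.
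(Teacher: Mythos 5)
Your plan inverts where the difficulty of this theorem actually lies. You reduce bridges and small cuts away so as to work in the cyclically $4$-edge-connected case, where Petersen's theorem gives a perfect matching $M$ and $G\setminus M$ is a union of cycles. But the paper notes explicitly that precisely for cubic graphs admitting a perfect matching (all bridgeless graphs included) Theorem~\ref{or5} has ``a much simpler and ten times shorter proof''; the entire burden of the argument is the case of cubic graphs \emph{without} a perfect matching, which by Petersen's theorem must have bridges. That is exactly what your step (i) is supposed to dispose of, and it is left as an assurance that it is routine. It is not: removing a bridge $uv$ leaves two odd-order pieces with one degree-$2$ vertex each, so suppressing those vertices yields two even cubic graphs whose bisections can be combined only if $u$ and $v$ receive opposite colours; but one must also control the colour forced on $u$ relative to its neighbours $a,b$ (reinserting $u$ onto a formerly single edge $ab$ can grow a monochromatic path past three vertices), prevent loops when $a=b$, and re-verify $d(A)\ge\Delta(A)$ for sets $A$ meeting the bridge. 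These constraints interact across the two sides, nothing is checked, and the $2$- and $3$-cut liftings raise analogous issues which are likewise not addressed.

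Even granting the reduction, your treatment of the core case is not an argument: you observe that the matching edges must neither chain short cycle-fragments into long monochromatic trees nor violate $d(A)\ge\Delta(A)$, and then say the crux is a ``careful, possibly case-analytic or discharging, choice'' of the colouring---which is a restatement of what is to be proved. The paper's proof takes an entirely different route and needs no bridge reduction at all. It invokes Lemma~\ref{factor} (from \cite{emt2}): a spanning factor $F$ of an \emph{arbitrary} cubic graph into paths of length $\ge1$ and cycles, with carefully engineered non-adjacency conditions among path endpoints and odd-cycle vertices; $F$ stands in for the $2$-factor that need not exist. It then builds a tree-like skeleton $S$ over the $F$-components and runs a structural induction over prime-even subgraphs, branches, and limbs, with Rule~\ref{crit} and Rule~\ref{psij} synchronising the alternating colouring, the orientability inequality of Lemma~\ref{orientable}, and the neutralisation of bi-critical odd cycles. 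That unconditional factorisation, and the parity bookkeeping it forces (Claims~\ref{par}, \ref{parity}), is the genuinely missing content in your outline.
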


\begin{proof}
Theorem 11 of \cite{emt2} states the existence of a 5-weak
bisection for every cubic graph. The proof in \cite{emt2} however
does not guaranty an {\bf orientable} bisection. To reach that
goal we modify and significantly extend the proof of Theorem 11 of
\cite{emt2}.

The following lemma summarizes the part of our proof where we
explicitly rely on \cite{emt2}:

\begin{lemma} \label{factor}
A cubic graph $G=(V,E)$ contains a spanning factor $F$ where every
connected component is either a path on at lest two vertices, or a
cycle, and the following conditions hold:

\begin{enumerate}

\item the two endvertices of an odd (number of vertices) path in
$F$ ore non-adjacent.

\item An endvertex of one path in $F$ and an endvertex of
another path in $F$ are non-adjacent.

\item  A chord in an odd cycle of $F$ (if exits) is
parallel to an edge of that cycle.

\item If a vertex $y$ of an odd cycle $C$ of $F$ is adjacent to a vertex $x$ of another component
then $x$ is an {\bf internal} vertex of an odd path of $F$.

\item An odd path of $F$ is connected to at most one vertex of at
most one odd cycle of $F$.
\end{enumerate}
\end{lemma}

{\bf References} to these results and their proofs in Section 3 of
\cite{emt2} are (Claim numbers refer to that article. The factor
$F$ is denoted in \cite{emt2} by $P^*$): \leavevmode
\begin{enumerate}
\item Definition 12 (of \cite{emt2}).
\item Claim 13.
\item Claim 14.  In \cite{emt2} a graph is assumed to be {\bf
simple}. If parallel edges are allowed the proof of Claim 14
remains valid for the case where $u$ and $v$ are non-consecutive
vertices of $C$.

\item Claim 15. It is proved in \cite{emt2} that $x$ lies in an even position on an odd path,
but that additional fact is irrelevant to our needs here.
Furthermore, if $C$ is a chordless cycle in a simple cubic graph,
as it is assumed to be in \cite{emt2}, then
 {\bf every} vertex of $C$ is adjacent to a vertex out of
$C$. We have chosen a weaker formulation because this is not the
case when parallel edges are allowed  and we also wish the lemma
to apply to certain {\bf subgraphs} of $G$, where the degree of a
vertex can be less than $3$.
\item Claims 16 and 17
\end{enumerate}

\begin{notation} \label{ext}
\leavevmode \begin{itemize}
\item Until the end of the current section $F$ is a factor of a cubic
graph $G=(V,E)$, which complies with Lemma \ref{factor}. Every
connected component of $F$ is referred to as an {\bf
$F$-component}.
 \item A vertex $v \in V$ is {\bf
external} if it is an endvertex of a path of $F$, and also:
\item Two external vertices on each odd cycle $C$ are the two endvertices of
an arbitrarily selected {\bf simple} (not two parallel edges) edge
of $C$. At least every second edge along $C$ is simple so that
selection is clearly doable.
\item An edge which connects an external vertex of an odd cycle $C$
to a vertex which does not belong to $C$ (on an odd path by Lemma
\ref{factor}-4) is called a {\bf Critical} edge.
\item A vertex which is not external
is {\bf Internal}.
\item Let $C$ be an $F$-component on $k$ vertices
$v_1,v_2,...,v_k$ as ordered along $C$. Unless $C$ is an even
cycle, $v_1$ and $v_k$ are the external vertices. If $C$ is an
even cycle they are arbitrarily selected two consecutive vertices.
\item An alternating coloring $\Psi$ of $C$ is either a {\bf Parity}
coloring - $\Psi(v_i)=1$ if $i$ is odd and $\Psi(v_i)=2$ if $i$ is
even, or a {\bf Counter parity} coloring - $\Psi(v_i)=2$ if $i$ is
odd and $\Psi(v_i)=1$ if $i$ is even.
\item An alternating coloring of a subgraph of $G$ whose vertex
set is a union of $F$-components, is the union of alternating
colorings of these components.
\end{itemize}
 \end{notation}

 The following coloring rule will be obeyed in our construction of
 an orientable $5$-weak bisection of $G$:

 \begin{rrule}\label{crit}
If each of the two external vertices of an odd cycle $C$ is
incident with a critical edge then at least one of these two
critical edges is {\bf bi-chromatic} that is, its two endvertices
differ in color.
\end{rrule}

\begin{claim} \label{no4}
A monochromatic connected subgraph obtained by  an alternating
coloring which complies with Rule \ref{crit} is a tree on at most
three vertices.
\end{claim}
\begin{proof}
We say that a vertex is {\bf Good} if it differs in color from at
least two of its neighbors (a neighbor through two parallel edges
counts for that matter as two) and it is otherwise {\bf Bad}.
Observe that no vertex shares color with all its three neighbors
and hence a connected monochromatic subgraph is a tree on at most
three vertices if and only if it does not contain two adjacent bad
vertices of the same color. An internal vertex is clearly good. By
Lemma \ref{factor}-1,2,4 two external vertices of the same color
are adjacent only if they are the external vertices $u$ and $v$ of
an odd cycle $C$. We have to show that at least one of these two
external vertices is good. Let us assume that the common color of
$u$ and $v$ is 1. The other neighbor on $C$ of each one of them is
of color 2 (alternating coloring) so if one of them is adjacent to
that other neighbor through parallel edges then it is good.
Otherwise, Rule \ref{crit} guaranties for one of $u$ and $v$ a
second neighbor of color 2 which makes that endvertex good.
\end{proof}

Our goal is to describe an alternating coloring of $G$ which
complies with Rule \ref{crit} and also defines an orientable
bisection.

 Let us refer to an edge of $G$ which does not belong to $F$
 as a {\bf Skeletal} edge (s-edge). We now construct
a spanning graph $S$ of $G$ which fully contains all the
$F$-components and also a set $E_X$ of s-edges such that when
every $F$-component is contracted into a single vertex the graph
obtained from $S$ is a tree (For that to be possible we assume
that $G$ is connected).

The first step in the construction of $S$ is to include in $E_X$
every critical edge. This step is meant to gain control over the
edges relevant to Rule \ref{crit}. Lemma \ref{factor}-5 guaranties
no violation of the "tree like" structure of $S$.

To complete the construction of $S$ we add to $E_X$ additional
s-edges until the required property is reached, that is, $S$
becomes a tree when every $F$-component is contracted into a
single vertex. See Figure \ref{val}, where an $F$-component is
represented by a horizontal line (with an arc underneath if it is
a cycle) and the skeletal edges are vertical.

$S$ as a subgraph of $G$ is sub-cubic. We now generalize the
notion of an orientable bisection for graphs which are not
necessarily cubic. That definition is not associated with an
actual orientation but it carries the essence of Condition 1 of
Theorem \ref{first}:

\begin{definition} \label{orient}
A partition $\Psi=(V_1,V_2)$ of an even (number of vertices) graph
$H=(V',E')$ is an orientable bisection if for every set of
vertices $A\subseteq V'$
\[d_H(A)\geq\Delta_{\psi}(A).\]
The inequality above where $A=V'$ implies that $\Psi$ is indeed a
bisection.
\end{definition}

\begin{notation}\label{skel}
\leavevmode
\begin{itemize}
\item In the sequel we construct a coloring $\Psi$ of $S$ which complies
with Rule \ref{crit} and with the inequalities of Definition
\ref{orient}. Let's call such a coloring $\Psi$ a {\bf Valid}
coloring. An example of the obtained coloring is depicted in
Figure \ref{val}.

\item The removal of $k$ s-edges decomposes $S$ into $k+1$ connected
subgraphs. Each of these components is referred to as a {\bf
skeletal} subgraph (s-subgraph). An s-subgraph either entirely
contains an $F$-component $C$, or it is disjoint from $C$.
 \item An s-subgraph $H=(V',E')$ is even or odd according to the parity
of $|V'|$.
 \item We say that an s-edge $e$ of $S$ is even or odd
according to the parity of each of the two s-subgraphs obtained by
the removal of $e$ (the same parity since $S$ is even). Similarly
$e$ is even or odd in an even s-subgraph $H$ of $S$ according to
the parity of the two subgraphs of $H$ obtained by the removal of
$e$.
\end{itemize}
\end{notation}

\begin{claim} \label{par}
The removal of a sequence of  even s-edges decomposes $S$ into
{\bf even} disjoint s-subgraphs.
\end{claim}
\begin{proof}
Assume to the contrary a minimal sequence $Q$ of even s-edges,
such that the removal of the last (the order does not really
matter) one $e$ decomposes one of the  existing even components
into two odd ones. At that stage the decomposition includes
exactly two odd components and the others are all even. Reinsert
the edges of $Q \setminus \{e\}$ one by one. On each such step two
components are merged into one. As $e$ remains outside, the two
odd components remain separated from each other, so after each
step there are still exactly two odd components. Finally when $e$
is the only edge left outside, $S$ is decomposed into two odd
s-subgraph, in contradiction with the assumption that $e$ is even.
\end{proof}

It is worth noting  that even s-subgraphs can also be generated by
the removal of  {\bf odd} s-edges.  e.g. the removal of two
s-edges of which one is odd provides one  even s-subgraph $H$ (and
two odd ones), such that an even edge of $S$ may be odd in $H$.
The assertion of the last claim, therefore, cannot be taken for
granted without a proof.

\begin{definition}
A {\bf Prime even} s-subgraph (pes-subgraph) is obtained from $S$
by repeatedly removing even s-edges until the remaining s-edges
are all odd.
\end{definition}.

\begin{claim} \label{noeven}
In order to prove the existence of a valid coloring of $S$ as well
as of every even s-subgraph $H$ obtained from $S$  by the removal
of any set of even s-edges, it suffices to prove the existence of
such a coloring for every pes-subgraph. When applying Rule
\ref{crit} to a subgraph $H$ we should consider only  odd cycles
with two critical edges which belong both to $H$.
 \end{claim}

      \begin{proof}
       IF $H$ is a pes-subgraph then we are done. Otherwise, let $e$ be an even s-edge whose removal decomposes
       $H$ into two even subgraphs $H_1$ and $H_2$. We can assume by
       induction the existence of valid colorings of $H_1$ and of $H_2$.
        The union $\Psi$ of these two
       colorings is clearly an orientable alternating bisection of $H$.  As for
       Rule \ref{crit}, by induction it is satisfied for odd cycles with two critical edges in the
        same subgraph $H_i$. Attention should be paid to the
       case where the removed edge $e$ is a critical edge of an odd cycle $C$, in
       one of the subgraphs. Notice that switching
       between the colors 1 and 2 in one of the two {\bf even}
       subgraphs $H_1$ or $H_2$
       does not compromise the  orientable bisection. That way
       we can guaranty the two endvertices of $e$ to be of distinct colors, as
       required by Rule \ref{crit}. See Figure \ref{val}
\end{proof}

\begin{notation}
 \leavevmode
\begin{itemize}
\item Let $H$ be a pes-subgraph of $S$. We select an $F$-component $R_H$ to be the {\bf root} of $H$.
\item The removal of an s-edge $e$ decomposes $H$ into two odd
 s-subgraphs, the {\bf Lower
 side} of $e$,  $D(e)$ which contains $R_H$ and its {\bf Upper side} $U(e)$ which does not
contain $R_H$.
\item Accordingly, the endvertices of $e$ are its {\bf Upper} endvertex and its {\bf Lower} endvertex.
\item The subgraph $\bar e$ consists of $e$ and its two endvertices.
\item The union $B$ of $\bar e$ and $U(e)$ is a {\bf Branch} of $H$.
$\bar e$ is the {\bf Stem} of the branch $B=\bar e \cup U(e)$ and
$U(e)$ is the {\bf Top} of $B$, also denoted by $t(B)$.
\item The $F$-factor $C$ in $t(B)$ which includes the upper vertex
of the stem is the {\bf Base} of the branch $B$.
\item The stem of a branch $B$ is also refereed to as the stem of the base of
$B$. Every $F$-factor in $H$, other than the root $R_H$ is the
base of a branch and hence has a stem.
 \item The lower endvertex of the stem $\bar e$ of a branch $B$ is
 the {\bf Heel of $B$}, denoted by $heel(B)$, while the upper endvertex is the {\bf
 Heel of $\bar e$}. The reason for that, somewhat confusing
 terminology will be cleared soon.
 \item A branch can also be obtained by contracting the lower side
 of its stem into a single vertex (the heel). Contracting an odd
 subgraph into a single vertex preserves subgraph parity so, like
 $H$, every branch is prime-even in the sense that the removal of
 any s-edge provides two disjoint {\bf odd} subgraphs. Accordingly we
 define a prime-even {\bf generalized} s-subgraph (pegs-subgraph)
 $T$ of a pes-subgraph $H$ to be either $H$ itself or a branch in
 $H$.
 \item The base of a pes-subgraph $H$ when referred to as a
 pegs-subgraph is its root $R_H$.
 \item Let $T$ be a pegs-subgraph and let $C$ be the base of $T$.
 A branch $L$ whose heel belongs to $C$ is a {\bf limb} of $T$.
 When $T$ is a branch, the stem $\bar e$ of $T$ also counts as one of its
 limbs.
 \item The upper endvertex  of the stem $\bar e$ of a branch belongs to the
 base of that branch. For that reason, $heel(\bar e)$ is
 counterintuitively its upper endvertex as previously defined. The lower endvertex of the stem
 forms its top subgraph $t(\bar e)$.
 See Figure \ref{branch}, where the stem is drawn from the base
 $C$ upwards with its lower endvertex at the top.
In Figure \ref{val} we get a more global view where the stems are
drawn from
 each components downwards toward the root.
 \end{itemize}
\end{notation}

Obviously:
\begin{observation}
All clauses of Lemma \ref{factor} apply to a pes-subgraph $H$ (as
well as to any s-subgraph) of $S$, where $F$ is restricted to the
$F$-components which are contained in $H$.
\end{observation}

As for a branch $B$, Lemma \ref{factor}-1,2,3,5 still similarly
apply. Lemma \ref{factor}-4 however, poses an issue when the base
of $B$ is an odd cycle $C$ and the heel of the stem is incident
with an external vertex of $C$. In that case the "out of $C$"
neighbor is the lower endvertex of the stem which is a single
vertex while the $F$-component to which it belongs is not
contained in $B$.

We say that an odd cycle $C$ in a pegs-subgraph $T$ is {\bf
bi-critical} in $T$ if both its external vertices are incident
with critical edges which belong to $T$.

Let us restate a stronger version of Rule \ref{crit} which applies
to any pegs-subgraph $T$, be it a branch  or a pes-subgraph:

\begin{rrule} \label{crit1} Let $C$ be a bi-critical odd cycle in a
pegs-subgraph $T$. At list one critical edge of $C$ which {\bf
does not belong to the stem} of $C$ should be bi-chromatic, that
is, its two endvertices should differ in color.
\end{rrule}
\begin{definition} A {\bf valid orientable bisection} of a pegs-subgraph $T=(V',E')$
 is an alternating coloring $\Psi$ of  $T$ which complies with Rule
\ref{crit1} and with the condition $d(A)_T\geq\Delta_{\Psi}(A)$
for every subset of vertices $A \in V'$.
\end{definition}

We have set the necessary tools to state and prove:

\begin{lemma} \label{nstep}
 Every pegs-subgraph $T$ admits a valid orientable bisection.
 \end{lemma}

 \begin{proof}

Let  the base $C$ of $T$ be an $F$-component on $k$ vertices
 $v_1,v_2,...,v_k$ as ordered along $C$. As previously stated, if $C$ is not an even
 cycle then $v_1$ and $v_k$ are selected to be the external vertices of $C$. If
 $C$ is an even cycle then $v_1$ and $v_k$ are two arbitrarily
 selected consecutive vertices along $C$.

 Notice that a
 vertex of $C$ is not necessarily incident with an s-edge and if $C$ is a path each of $v_1$ and $v_k$
  may be incident with two s-edges, accordingly, the
 number $m$ of limbs of $T$ can be smaller or
 larger or equal to $k$. Nonetheless:

 \begin{claim} \label{parity}
 The number $m$ of limbs of $T$
 is of the same parity as the order $k$ of the base $C$.
\end{claim}
\begin{proof}
The number of limbs $m$ is also the number of skeletal edges
incident with $C$.
 Removal of these s-edges decomposes $T$ into $m+1$ components of which $m$ are odd limbs' tops
and  the last one is the base $C$. The parity of the total number
of vertices in $T$ is
 therefore the parity of $m+k$. As $T$ is even that implies $m\equiv k(modulo~2)$
 (See Figure \ref{val}. Remember to count the stem among the limbs
 of each branch).
 \end{proof}

Let $L_1,L_2,...,L_m$ be the limbs of $T$ in nondecreasing order
of the indices of their heels among $v_1,v_2,...,v_k$.  The proof
proceeds by induction: We assume  the existence of a valid
bisection $\Psi_j$ for every limb $L_j$ of $T$ (verification for
the smallest pegs-subgraphs is left for the end of the proof).

Let $K_j$ be the vertex set of $t(L_j)$ the top of $L_j$ ($K_j$
includes all vertices of $L_j$ except for its heel).

 As $\Psi_j$ is a bisection and the heel of $L_j$ is a
single vertex, $\Delta_{\psi_j}(K_j)=1$. We say that the {\bf
color of the top set $K_j$} is 2 if $\delta_{\psi_j}(K_j)=1$, or
it is 1 if $\delta_{\psi_j}(K_j)=-1$.

Observe that $\Psi_j$ remains valid if the colors 1 and 2 are
switched. Accordingly, we can freely chose the color of each limb
top $K_j$ without violating the validity of the colorings
$\Psi_j$.

We perform that choice according to the following rules:
\begin{rrule} \label{psij}
\leavevmode
\begin{itemize}
\item If the base $C$ is any $F$-component other than a bi-critical odd cycle then
 $K_j$ gets its {\bf counter-parity color}, that is 2 if $j$ is odd and 1 if $j$ is
 even.

\item If $C$ is a bi-critical odd cycle in $T$ then assume that $v_1$ is adjacent to a vertex
$x$ on an odd path in $K_1$ and that the limb $L_1$ is not the
stem of $C$ (otherwise reverse the order of $v_1,...v_k$). For
$2\leq j\leq m-1$ let $K_j$ get its parity color (1 if $j$ is odd
and 2 if $J$ is even). Now select a color for $K_1$ such that
$\Psi_1(x)=2$. Conclude with coloring $K_m$ to make its color
distinct from the color of $K_1$ (See Figure \ref{branch}).

\item So far we have an orientable  bisections $\Psi_j$ for each
limb. Their union  however does not necessarily covers the entire
subgraph $T$, as some vertices of $C$ may not belong to limbs.
Also the sought coloring should be alternating on each
$F$-component.
 Thus we finalize the definition of a bisection $\Psi$ of $T$ by recoloring the vertices
 $v_1,...,v_k$ each with its parity color. Clearly an alternating
 coloring.
 \end{itemize}
\end{rrule}

\begin{figure}[h!]
\centering \includegraphics[width=12cm]{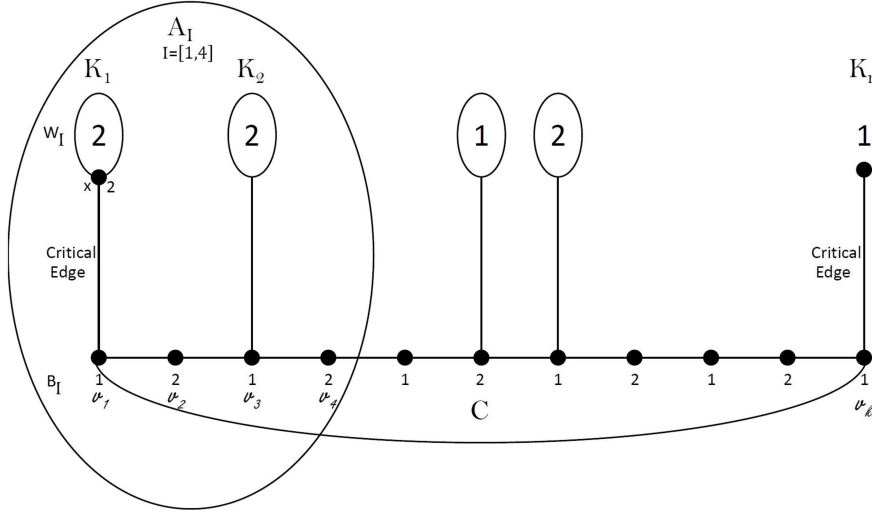}
\caption{Coloring of a branch with a bi-critical odd cycle as a
base}\label{branch}
\end{figure}

 Following Rule \ref{psij}, one can verify that in either
 case half of the $k+m$ elements - vertices of $C$ and
 limbs' tops - are colored 1 and the other half are colored 2.
Consequently,  $\Psi$ is indeed a bisection. In making that
observation notice the equal parity
 of $k$ and $m$ and the fact that when both are odd the color 1 has majority in $C$
  and the color $2$ gains
 majority among the limbs' tops (As before, the color $\Psi(K_j)$ of a top set is the one with majority
 among the vertices of $K_j$. The contribution of a top
  set to $\delta_{\Psi}$ is the same as that
 of a single vertex of the same color).

 Let us now verify that $\Psi$ is a valid orientable
 bisection of $T$:

Rule \ref{crit} in its more specific formulation as Rule
\ref{crit1}, can be assumed (induction) to be obeyed by each of
the colorings $\Psi_j$ of the limbs $L_j$. Inserting the base $C$
of $T$ may be relevant to the rule only if $C$ is either a
bi-critical odd cycle in $T$ or an odd path (Lemma
\ref{factor}-4).

Assume that $C$ is  a bi-critical odd  cycle. We chose in that
case the color of $K_1$ such that the vertex $x$ to which the
external vertex $v_1$ is adjacent, is of color 2. Then $v_1$ as a
vertex of $C$ is assigned with its parity color, namely 1 (Rule
\ref{psij}). Consequently, Rule \ref{crit1} is obeyed also by the
new coloring $\Psi$ of $T$ (See Figure \ref{branch}, where $K_1$
is assumed to be colored 2 for the endvertex $x$ of the critical
edge to get the color 2).

We now consider the case where the base  $M_j$ of a certain limb
$L_j$ is a bi-critical odd cycle, and the base $C$ of $T$ is an
odd path. $M_j$ in that case is connected to $C$ by its stem,
which may include an external vertex of $M_j$. Nonetheless, Rule
\ref{crit1} when applied to $L_j$ (induction) guaranties that an
external vertex $y$ of $M_j$, which does not belong to the stem of
$M_j$ is already adjacent through a critical edge to a vertex $x$
which differs from $y$ in color. So we are good in that case as
well.

The vertices of each $F$-component got their parity color so
$\Psi$ is an alternating coloring as required for a valid
bisection.

It remains to show that $\Psi$ is an orientable bisection.

\begin{claim} \label{suff}
For the condition $d(A)\geq\Delta(A)$ of Definition \ref{orient}
it suffices to consider sets of vertices $A$ such that for every
limb $L_j$ of $T$, $A$ includes either all or none of the vertices
of $L_j$.
\end{claim}
\begin{proof}
Let $A$ be a set of vertices of $T$ which does not include
$heel(L_j)$ but does contain a non empty subset $A'$ of $K_j$. it
implies that $A \setminus A'$ is disjoint form $L_j$ so we can
assume to have proved \[d(A \setminus A')\geq\Delta_{\Psi}(A
\setminus A').\] $\Psi$ and $\Psi_j$ are identical on $K_j$ and
therefore on $A'$. So $\Delta_{\Psi}(A')=\Delta_{\Psi_j}(A')$
(Regardless of the color $\Psi(heel(L_j))$ which might have
changed). As $\Psi_j$ is an orientable bisection \[d(A')\geq
\Delta_{\Psi}(A')\] the relevant edge cuts are disjoint so
\[d(A)=d(A\setminus A')+d(A')\] Also
$\delta_{\Psi}(A)=\delta_{\Psi}(A\setminus A')+\delta_{\Psi}(A')$
namely
\[\Delta_{\Psi}(A)\leq\Delta_{\Psi}(A\setminus
A')+\Delta_{\Psi}(A')\]

It all comes to the required inequality \[d(A)\geq
\Delta_{\Psi}(A)\] $\Psi$ is a bisection and as such provides the
same values of $d$ and $\Delta$ to a set and to its complement.
Accordingly, the above also applies to sets $A$ which do include
$heel(L_j)$.
\end{proof}

In addition to complete limbs of $T$ the set $A$ may include some
vertices of $C$ which are not incident with s-edges.

For an integer $i, 1\leq i \leq k$ let $P_i$ be the set of
vertices of the limb whose heel is $v_i$, or the singleton
$\{v_i\}$ if there is no such limb.

Accordingly, a relevant set $A_I$, is defined by a set of indices
$I \in \{1,2,...,k\}$  as:
\[A_I= \bigcup_{i\in I} P_i\]

Vertices of $C$ which do not belong to $I$ separate $I$ into a set
$\cal J$ of disjoint intervals of consecutive integers. It is
apparent that $d$ as well as $\delta$ can be computed separately
for each interval in $\cal J$ and then sum up to obtain  $d(A_I)$
and $\delta(A_I)$. Consequently:

\begin{claim}
For the proof of  $d(A)\geq\Delta(A)$ It suffices to consider sets
$A_I$ where I is an interval $[i_l,i_r]=\{i|i_l\leq i \leq i_r\}$
of consecutive integers between 1 and $k$.
\end{claim}

$d(A_I)=2$ whenever $I$ is {\bf Internal}, that is, if $C$ is a
cycle or $C$ is a path where $1\notin I$ and $k \notin I$. If $C$
is a path then $d(A_I)=1$ if $I$ is a {\bf Terminal Interval}
which either include 1 or $k$ (if both then $A_I$ is the entire
set of vertices with $d= \delta=0$).

Let $I$ be the interval $[i_l,i_r]$. For the computation of
$\delta(A_I)$ we represent the colors of the vertices of $A_I$ by
two sequences of the colors 1 and 2:
\begin{itemize}
\item The {\bf Base sequence} $B_I$ consisting of the colors
$\Psi(v_{i_l}),\Psi(v_{{i_l}+1}),...,\Psi(v_{i_r})$. Since $\Psi$
is an alternating coloring this is always an alternating sequence.
\item The second is the {\bf Top sequence} $W_I$ of the colors
 $\Psi(K_{j_l}),...\Psi(K_{j_r})$ of the top sets of the limbs which belong to $A_I$.
 These colors are set according to Rule \ref{psij}.
\end{itemize}

Let us consider first the case where the base $C$ of $T$ is not a
bi-critical odd cycle. In that case both, the base and the top
sequences are alternating (See Rule \ref{psij}). The difference
between the numbers of 2s and 1s in an alternating sequence is at
most 1, so it sums up to at most 2 for the union of the two
sequences. That comes to $\Delta(A_I)\leq 2$. As $I$ is internal
$d(A_I)=2$ so
\[d(A_I)\geq \Delta(A_I)\]
as required.

If $I$ is a terminal interval of a path, say $I=[1,i_r]$ then
either $\delta(B_I)=0$ if $|I|$ is even, or $\delta(B_I)=-1$ if
$|I|$ is odd. Rule \ref{psij} implies in that case that the first
term of $W_I$ is $\Psi_1(K_1)=2$ (unless $W_I$ is empty, which
makes no exception), so either $\delta(W_I)=0$ or $\delta(W_I)=1$.
Summing this up provides $\Delta(A_I)\leq 1$. As $I$ is terminal
$d(A_I)=1$ so the required inequality holds. If $I=[i_l,k]$ we
rely on the equal parity of $k$ and $m$ so this time the last
(rather than the first) terms of $B_I$ and of $W_I$ are distinct,
which leads to the same computation and final result.

Now to the case where the base $C$ is a bi-critical odd cycle in
$T$. In that case $v_1$ and $v_k$ both are colored 1 and both are
heels of limbs of $T$. Rule \ref{psij} guaranties the top of one
of these two limbs to be colored 2. This limb, say $L_1$ (with no
loss of generality) contributes a 1 to the base sequence and a 2
to the top sequence. When $L_1$ is removed both the base and the
top sequences (for the entire subgraph $T$) become alternating
even sequences, where $\Delta(A_I)\leq 2$ for every interval $I$.
That does not change when $1\in I$ because $\delta(P_1)=0$. Since
$C$ is a cycle there are no terminal intervals and $d(A_I)=2$ for
every interval $I$. $d(A_I)\geq \Delta(A_I)$ follows (See Figure
\ref{branch}).

To initialize the induction, Lemma \ref{nstep} should be verified
where $T$ consists of a single $F$-component. $T$ can either be an
even isolated $F$-component, or a branch which consists of an odd
base $C$ and its stem.  An isolated even $F$-component provides an
even alternating base sequence and an empty top sequence. An odd
base $C$ with a stem yields an odd base sequence and a top
sequence consisting of a single 2. Neither of the two poses an
exception to the proof schema described above.
\end{proof}

\begin{figure}[htb]
\centering \includegraphics[width=14cm]{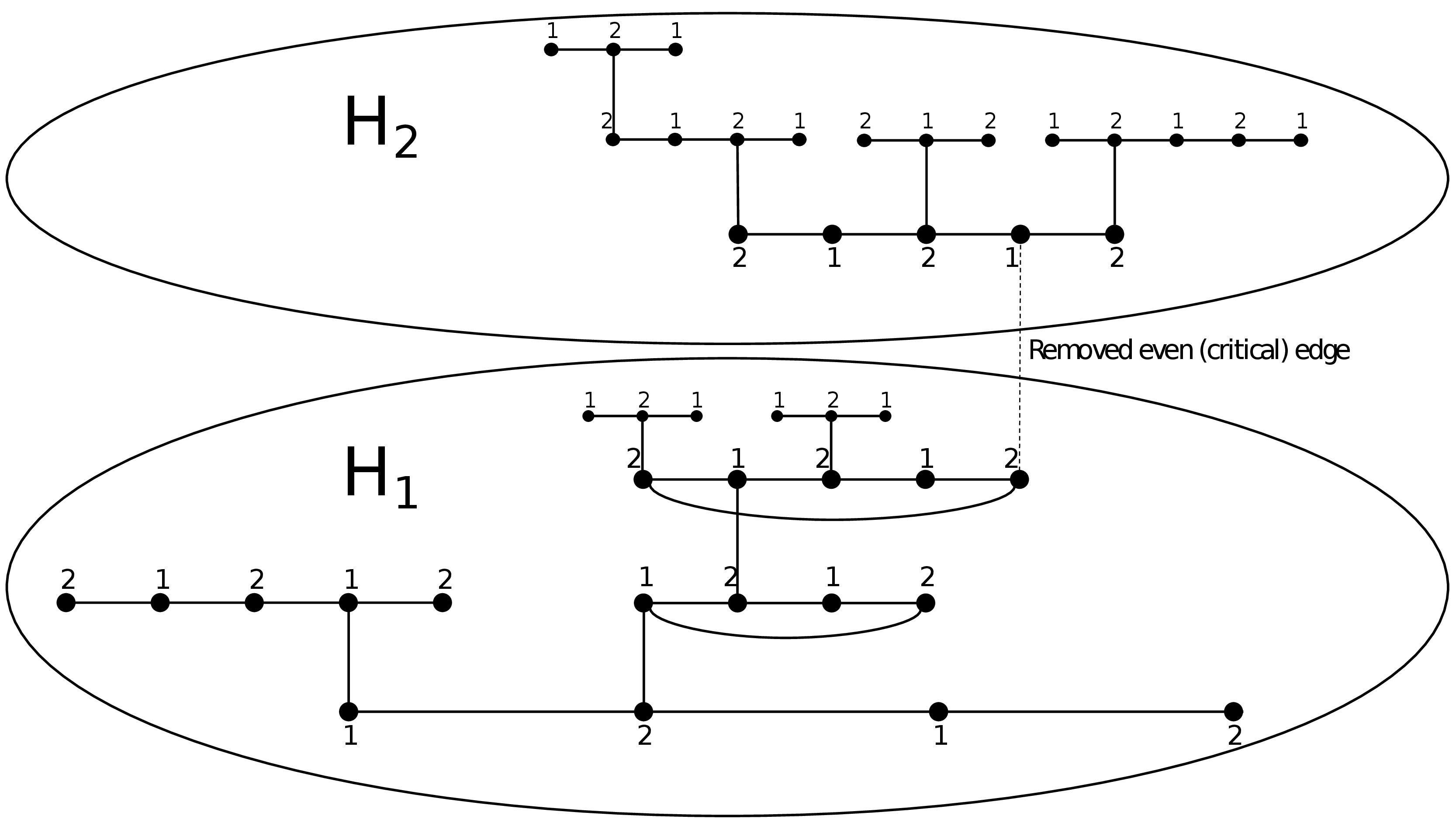}
\caption{Explicit valid coloring of the union of two adjacent
prime even s-subgraphs}\label{val}
\end{figure}

{\bf Concluding the proof of Theorem \ref{t11} (and Theorem
\ref{or5}):}

 Lemma \ref{nstep} where $T=S$ provides an alternating orientable bisection $\Psi$ of $S$.

Rule \ref{crit} is meant to guaranty that at least one of the two
external vertices of an odd cycle is "good" in the sense that it
differs in colors from at least two of its neighbors. Let us
summarize the verification of that property for the original cubic
graph $G$:

In the proof of Lemma \ref{nstep} we took care of odd cycles with
two critical edges within a pegs-subgraph $T$. Let's observe that
it indeed suffices:

When constructing $S$ we started with including in $S$ every
critical edge of $G$. If an  external vertex $y$ of an odd cycle
$C$ is not incident with a critical edge within a pegs-subgraph
$T$ then either there is no such edge in $G$ or it was removed as
an even edge when decomposing $S$ into pes-subgraphs. In the first
case $y$ connects to a neighbor $x$ on $C$ with two parallel
edges. That makes $y$ good because $x$ counts as two and its color
differs from the color of $y$ (alternating coloring). In the
second case $y$ can be considered good as shown in the proof of
Claim \ref{noeven}.

As $S$ is a spanning subgraph of $G$ Both graphs $G$ and $S$ share
the same vertex set $V$. Therefore the bisection $\Psi$ applies to
$G$ as well as it does for $S$ with the same value of
$\Delta_{\Psi}(A)$ for every $A\subseteq V$. On the other hand,
the edge set of $S$ is a subset of $E$ which implies $d_G(A)\geq
d_S(A)$. The inequalities $d(A)\geq \Delta(A)$ therefore hold for
the bisection $\Psi$ of $G$ and $\Psi$ is indeed an alternating
orientable bisection of $G$, which complies with Rule \ref{crit}.

Claim \ref{no4} then asserts that a connected monochromatic
subgraph induced by $\Psi$ is a tree on at most 3 vertices. By
Definition \ref{weak}, $\Psi$ is an orientable 5-weak bisection of
$G$.
\end{proof}

Theorem \ref{or5} cannot be improved as it provides a tight
result:

\begin{theorem} \label{mesh}
There exists a cubic graph $G$ with $bed(G)=urd(3\frac12,\frac12)$
\end{theorem}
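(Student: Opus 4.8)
The plan is to exhibit an explicit cubic graph $G$ and show that $bed(G)$ is exactly $urd(3\tfrac12,\tfrac12)$. By Theorem \ref{third} we already know $urd(3\tfrac12,\tfrac12) \subseteq span(3\tfrac12,\tfrac12) \subseteq bed(G)$ for every cubic $G$ admitting a $(3\tfrac12,\tfrac12)$-flow, and Theorem \ref{or5} guarantees every cubic graph is such a $G$; so one inclusion is free and the entire content is the reverse inclusion $bed(G) \subseteq urd(3\tfrac12,\tfrac12)$. Equivalently, I must find $G$ that admits \emph{no} $(r,\alpha)$-flow strictly stronger than $(3\tfrac12,\tfrac12)$ — that is, no $(r,\alpha)$-flow with $(r,\alpha) \notin urd(3\tfrac12,\tfrac12)$. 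The boundary of $urd(3\tfrac12,\tfrac12)$ consists of the vertical ray $r=2$, the segment from $(2,1)$ to $(3\tfrac12,\tfrac12)$ (which lies on $M_5$, the trace-$5$ line), and the horizontal ray $\alpha = \tfrac12$. So I need $G$ for which: (a) $G$ admits no bounded excess flow of trace $<5$, and (b) $G$ admits no $(r,\alpha)$-flow with $\alpha < \tfrac12$ except those forced to have trace $\ge 5$ — more precisely, the obstruction set of $G$ must pin the lower-left boundary of $bed(G)$ to the two edges $M_5$ and $\{\alpha=\tfrac12\}$ of $urd(3\tfrac12,\tfrac12)$.

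The key steps, in order. First, by Theorem \ref{main}, condition (a) — no flow of trace $<5$ — is equivalent to $G$ admitting \emph{no orientable $4$-weak bisection}. The natural candidate is a graph built so that every balanced orientation induces a bisection containing a monochromatic tree on $4$ vertices (or a monochromatic cycle); the Petersen graph is the classical example of a cubic graph with no $4$-weak bisection at all, but I want a graph with a perfect matching and preferably with additional structure, so I would instead take a small explicit gadget — plausibly a suitable mesh-like or prism-like cubic graph (the name "mesh" in the theorem label is a hint) — and verify by a short case analysis, or by invoking Theorem \ref{first} with a well-chosen vertex set $A$, that no orientable $4$-weak bisection exists. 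Second, for condition (b), I apply Theorem \ref{first} directly: for a concrete $A\subseteq V$ with computed $d(A)$, $\Delta(A)$, $|A|$, the inequality $\alpha \ge \frac{2d(A) - (d(A)-\Delta(A))r}{2|A|}$ carves out a half-plane, and I need to find one or two sets $A$ whose half-planes, intersected with the universal constraints ($r\ge 2$, $tr \ge 3$) and with the trace-$\ge5$ constraint from step one, have intersection exactly $urd(3\tfrac12,\tfrac12)$. Concretely I expect one set $A$ giving a line through $(3\tfrac12,\tfrac12)$ with slope matching the horizontal edge as $r\to\infty$, i.e. forcing $\alpha \ge \tfrac12$ outright (a set with $d(A)=\Delta(A)$, e.g. an induced cycle, or with $d(A)-\Delta(A)$ small enough that the $r$-coefficient vanishes in the limit); combined with the $M_5$ edge from step one this closes the region. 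Third, I confirm that $(3\tfrac12,\tfrac12)$ itself \emph{is} attained — but this is immediate from Theorem \ref{or5} — and that every point of $urd(3\tfrac12,\tfrac12)$ is attained, which is Theorem \ref{third}.

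The main obstacle will be step one combined with the perfect-matching desideratum: exhibiting a cubic graph that simultaneously (i) has no orientable $4$-weak bisection and (ii) has all its other flow-obstructions aligned so that $bed(G)$ does not protrude below the line $\alpha=\tfrac12$ anywhere to the right of $(3\tfrac12,\tfrac12)$. A graph could easily fail (ii) — for instance if it has a $3$-edge-coloring it would admit $(r,\frac{4-r}{2})$-flows dipping well below $\alpha=\tfrac12$ for $r$ near $4$ — so $G$ must be a snark-like object (Vizing class two) whose $4$-weak-bisection obstruction is exactly a $4$-vertex monochromatic tree, giving the $L_5$ bound, and no stronger obstruction. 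I would verify (ii) by checking Theorem \ref{first}'s Condition 2 over all subsets $A$ of the small graph — finite, hence in principle a routine (if tedious) check — identifying the binding sets, and showing their half-planes together with the trace-$\ge 5$ constraint and the universal constraints intersect in precisely $urd(3\tfrac12,\tfrac12)$. The write-up would present $G$ by a figure (as "Figure \ref{butter2}" and the surrounding text suggest the paper does for related examples), list the two or three critical vertex sets with their $(d,\Delta,|A|)$ data, and conclude.
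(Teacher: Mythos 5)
Your high-level framework is correct and matches the paper's: by Theorems \ref{or5} and \ref{third} the inclusion $urd(3\frac12,\frac12)\subseteq bed(G)$ is automatic, so all the content is in the reverse inclusion, which decomposes into two bounds --- $\alpha\ge\frac12$ for every balanced orientation, and $tr(f)\ge 5$ for every flow, the latter obtained from Theorem \ref{main} by showing $G$ has no orientable $4$-weak bisection. You also correctly identify the boundary geometry (the segment of $M_5$ and the horizontal ray $\alpha=\frac12$). So far so good.

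The gap is that you never exhibit the graph, and for an existence statement that \emph{is} the proof. You correctly observe at the end that "the write-up would present $G$ by a figure \dots and conclude," but that punts precisely the part you need to supply. The paper does exhibit an explicit small cubic graph (right-hand diagram of Figure \ref{butter2}): it has parallel edges and a bridge, and the $\alpha\ge\frac12$ bound comes from a short direct argument --- at a particular vertex $x$, two of the three incident edges are co-oriented in any balanced orientation, carry total flow at least $2$, and that quantity must be absorbed as excess over a block of four vertices --- rather than from a Theorem~\ref{first}/Condition~2 half-plane computation (though that route would work too). The "no $4$-weak bisection" claim is then a finite case check.

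Two of your guiding heuristics also point you in the wrong direction. First, you say you "want a graph with a perfect matching," but that is the opposite of what is needed: Conjecture \ref{bl3} (which the paper advances) would force every non-Petersen cubic graph \emph{with} a perfect matching to admit an orientable $4$-weak bisection and hence a $(3\frac13,\frac13)$-flow, which would land strictly inside $urd(3\frac12,\frac12)$; so the extremal graph of this theorem almost certainly has \emph{no} perfect matching, and indeed the paper's example is a non-simple bridged graph. Second, the guess that the label "mesh" signals a mesh/prism-like gadget is unfounded --- the paper's graph is a small multigraph with a bridge and parallel edges. Without a concrete candidate that simultaneously has no orientable $4$-weak bisection and has its bridge-side small enough to force $\alpha\ge\frac12$, the argument cannot be completed.
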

\begin{proof}

\begin{figure}[h!]
\centering \includegraphics[width=12cm]{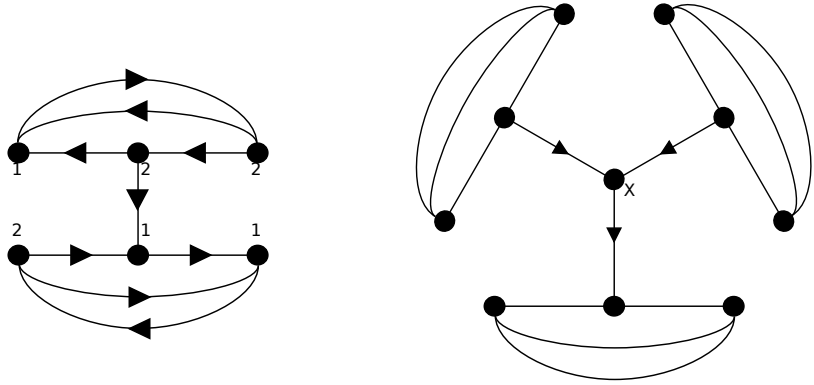} \caption{An
orientable $4$-weak bisection with no $5$-nzf (left)
$~~~~~~~~~~~~~~~~~~~~~~~~~$ A graph $G$ with
$bed(G)=urd(3\frac12,\frac12)$ (right)}\label{butter2}
\end{figure}

Let $f$ be an $(r,\alpha)$-flow in any balanced orientation of the
graph $G$ on the diagram at the right side of Figure
\ref{butter2}. Two of the three edges incident with the vertex $x$
are directed both into $x$, or both from $x$ outwards. The total
flow on these two edges is at least $2$ and it spreads as
(positive or negative) excess among four vertices, which makes
$\alpha \geq \frac12$. A brief case analysis shows that $G$ does
not admit a $4$-weak bisection (any bisection either induces two
monochromatic parallel edges or a monochromatic path on three
vertices). By Theorem \ref{main} $tr(f)\geq 5$ and since $\alpha
\geq \frac12$ it implies $(r,\alpha)\in urd(3\frac 12,\frac12)$,
so $bed(G)\subseteq urd(3\frac 12,\frac12)$ . Theorem \ref{or5} on
the other hand yields $(3\frac 12,\frac 12)\in bed(G)$.
$bed(G)=urd(3\frac 12,\frac12)$ follows.
 \end{proof}

 A straightforward conclusion is:
 \begin{corollary}
 The {\bf intersection of $bed(G)$ over all cubic graphs} $G$ is
 $span(3\frac12,\frac12)=urd(3\frac12,\frac12)$. That region is denoted by $\Omega$
 in Figure \ref{pos}.
\end{corollary}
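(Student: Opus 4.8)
The plan is to obtain the statement as a purely formal consequence of Theorems \ref{or5}, \ref{mesh} and \ref{third}, so I do not expect any genuine obstacle here; the one point that needs care is correct bookkeeping of the index set in the definition of $span$.

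First I would unwind the definition $span(3\frac12,\frac12)=\bigcap_{\{G\mid(3\frac12,\frac12)\in bed(G)\}}bed(G)$. By Theorem \ref{or5} every cubic graph $G$ admits a $(3\frac12,\frac12)$-flow, i.e.\ $(3\frac12,\frac12)\in bed(G)$ for \emph{every} cubic graph $G$; hence the index set of this intersection is the whole class of cubic graphs, and therefore $span(3\frac12,\frac12)$ coincides with the intersection of $bed(G)$ over all cubic graphs $G$. This already yields the first asserted equality.

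Next I would prove $span(3\frac12,\frac12)=urd(3\frac12,\frac12)$ by the two inclusions. The inclusion $urd(3\frac12,\frac12)\subseteq span(3\frac12,\frac12)$ is exactly Theorem \ref{third}. For the reverse inclusion, let $G_0$ be the cubic graph supplied by Theorem \ref{mesh}, for which $bed(G_0)=urd(3\frac12,\frac12)$; since $G_0$ is one of the graphs over which the above intersection is taken, $span(3\frac12,\frac12)=\bigcap_G bed(G)\subseteq bed(G_0)=urd(3\frac12,\frac12)$. Combining the two inclusions gives the desired equality, which is then the region labelled $\Omega$ in Figure \ref{pos}; this completes the argument.
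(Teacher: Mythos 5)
Your argument is correct and matches what the paper intends: the corollary is stated as ``a straightforward conclusion'' of Theorems \ref{or5} and \ref{mesh} (together with Theorem \ref{third} and the definition of $span$), and your three steps spell out exactly that reasoning. No issues.
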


{\bf Remark:} If restricted to cubic graphs which admit a perfect
matching (bridgeless graphs included), Theorem \ref{or5} has a
much simpler and ten times shorter proof. It is basically the
second proof of Theorem 11, presented for that restricted case in
\cite{emt2}, with some modifications.

\section{Open problems and concluding remarks}
\subsection{Two Conjectures}
$\phi_c(G)<5$ is not a necessary condition for the result in
Corollary \ref{k=4}. Following A. Ban and N. Linial \cite{banlin},
the {\bf revised Ban Linial Conjecture} (Conjecture 9 in
\cite{emt2}) asserts that every  cubic graph which admits a
perfect matching, other than the Petersen graph, admits a $4$-weak
bisection. We hereby suggest the following stronger version:

\begin{conjecture} \label{bl3} Every  cubic graph $G$ which admits a perfect matching, other
than the Petersen graph, admits an {\bf orientable} $4$-weak
bisection and equivalently, a $(3\frac 13,\frac 13)$-flow.
\end{conjecture}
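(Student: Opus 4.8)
Conjecture \ref{bl3} is stated as open, so what follows is a programme of attack rather than a proof, organised around the reductions already available and an adaptation of the machinery behind Theorem \ref{t11}.

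The plan is first to dispose of the easy cases. By Corollary \ref{3456} every $3$-edge-colorable cubic graph admits a $4$-nzf, hence an $(r,\frac{4-r}{2})$-flow for $2\le r\le 4$; at $r=3\frac13$ this is exactly a $(3\frac13,\frac13)$-flow, so such graphs are settled. By Corollary \ref{k=4} every bridgeless cubic graph with $\phi_c(G)<5$ is settled as well. Since bridgeless cubic graphs always have a perfect matching, this leaves two residual families: bridgeless cubic graphs with $\phi_c(G)=5$ (the Petersen graph being the prescribed exception, and every ``classical'' snark other than Petersen already being covered by Corollary \ref{k=4}), and cubic graphs that have a perfect matching but also a bridge. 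The bridge case should reduce, by induction, to the bridgeless one: a bridge lies in every perfect matching and each of its two sides spans an odd subgraph, so one splits at the bridge, solves the two (smaller) sides, and reattaches, the only bookkeeping being to align the two orientable $4$-weak bisections across the bridge exactly as stem edges are aligned in Claim \ref{noeven} and Rule \ref{psij}; a bridge can absorb arbitrary excess, so no flow conflict arises.

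For the main case I would retrace the proof of Theorem \ref{t11} with a sharper target. A bisection of a cubic graph is $4$-weak precisely when every vertex differs in colour from at least two of its three neighbours, i.e.\ every vertex is \emph{good} in the sense of Claim \ref{no4}: goodness everywhere forces every monochromatic subgraph to have maximum degree at most one, hence to be a disjoint union of edges and isolated vertices --- trees on at most two vertices --- and the converse is immediate. So the goal becomes the construction of an orientable bisection with no bad vertex. Starting from a perfect matching $M$, the $2$-factor $G-M$ is a union of cycles; colour the even cycles alternately (their vertices are then good irrespective of $M$) and regard the odd cycles as the obstructions, each carrying a single defect edge whose two endvertices must be rescued through their $M$-partners. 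One then builds a skeletal spanning subgraph $S$ as in the proof of Theorem \ref{t11}, now giving priority to the $M$-edges at the defect vertices of odd cycles (these take over the role of the critical edges), decomposes $S$ along even skeletal edges into prime-even pieces, and colours each piece so that both the cut condition $d(A)\ge\Delta(A)$ of Definition \ref{orient} holds and every defect vertex is good. By Theorem \ref{main} the resulting alternating orientable bisection yields the $(3\frac13,\frac13)$-flow.

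The hard part is genuinely hard: already the non-orientable version of this statement is the revised Ban--Linial Conjecture, which is open, so no such argument is presently known to close. The obstruction is concentrated in the odd cycles of $G-M$: when the defect constraints propagated through $M$ are globally unsatisfiable the construction collapses, and the Petersen graph --- whose only $2$-factor is a pair of disjoint $5$-cycles joined by five matching edges --- exhibits exactly this. A realistic route is therefore to first establish the revised Ban--Linial Conjecture, producing \emph{some} $4$-weak bisection, and then upgrade it to an orientable one by a sequence of local recolourings --- switching the two colours on suitably chosen even sub-pieces, as in Claim \ref{noeven} --- each preserving goodness of every vertex while repairing a violated cut $d(A)<\Delta(A)$; proving that such repairs always terminate, and never re-create a violation, for every cubic graph with a perfect matching other than Petersen, is the crux.
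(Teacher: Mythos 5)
The statement is Conjecture \ref{bl3}, which the paper poses as an open problem and does not prove, so there is no argument of the paper's to compare yours against; you are right to offer a programme rather than a proof. Your reductions are correct as far as they go: $3$-edge-colorable cubic graphs are settled by Corollary \ref{3456} (at $r=3\frac13$ the $(r,\frac{4-r}{2})$-flow is exactly a $(3\frac13,\frac13)$-flow), bridgeless cubic graphs with $\phi_c(G)<5$ are settled by Corollary \ref{k=4}, and your observation that a $4$-weak bisection is precisely one in which every vertex is good in the sense of Claim \ref{no4} is correct. You also correctly locate the crux --- bridgeless graphs with $\phi_c(G)=5$ other than Petersen, where even the non-orientable revised Ban--Linial Conjecture remains open --- and honestly acknowledge that no complete argument is available; this matches the paper's own treatment of the statement.

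One caveat: the bridge reduction is thinner than your sketch suggests. Deleting a bridge from a cubic graph leaves two odd components, each with a single degree-$2$ vertex, so to invoke the conjecture inductively you must first complete each side to a cubic graph and then verify that the completed graph admits a perfect matching and is not the Petersen graph; neither is automatic. Moreover, the colour-swap of Claim \ref{noeven} works precisely because the two pieces there are even, so swapping colours on one piece preserves the bisection; here the pieces are odd, and aligning the two orientable bisections across the bridge while preserving goodness and the cut condition of Definition \ref{orient} cannot be dismissed as bookkeeping --- it is closer in spirit to the branch/heel construction inside Lemma \ref{nstep}, where the heel colour is a constrained output rather than a free parameter. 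These are genuine obstacles and part of why the conjecture is open.
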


A perfect matching is not necessary for a $4$-weak bisection.
However, infinitely many cubic graphs which admit no $4$-weak
bisection are presented in \cite{emt2}, so the scope of Conjecture
\ref{bl3} cannot be extended further to include all cubic graphs.

 The graph in Figure \ref{butter2} contains pairs of parallel edges.
 When analyzing the smallest example, presented in \cite{emt2}, of a
{\bf simple} cubic graph $G$ with no $4$-weak bisection we found,
along similar lines, that $bed(G)=urd(4\frac14,\frac14)$ (notice
that $(4\frac14,\frac14)$, of trace $5$ lies on the lower part of
$L_5$). We have reasons to believe:
\begin{conjecture}
For every simple cubic graph $G$, $(4\frac14,\frac14)\in bed(G)$.
\end{conjecture}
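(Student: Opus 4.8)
The statement is conjectural; what follows is a plan of attack. The natural approach is to rerun the argument that proved Theorems \ref{or5} and \ref{t11}, pushing it one notch further by exploiting that $G$ is simple. First, reformulate via Theorem \ref{first}: with $(r,\alpha)=(4\frac14,\frac14)$ — note $0\le\frac14<3$ — the conjecture is equivalent to the existence of a bisection $(V_1,V_2)$ of $G$ that is orientable (Condition~1) and satisfies the rewriting of Condition~2 at this point, namely
\[9\,d(A)+2|A|\ \ge\ 17\,\Delta(A)\qquad\text{for every }A\subseteq V .\]
Testing this on a monochromatic cycle and on a monochromatic tree with four vertices — as in the implication $\neg 1\Rightarrow\neg 3$ of Theorem \ref{main} — shows that such a bisection is in particular an \emph{orientable $5$-weak bisection}; the residual force of the inequality is a \emph{global} balance condition on how the (now all of order at most $3$) monochromatic pieces sit inside $G$. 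Thus an orientable $5$-weak bisection — which every cubic graph has by Theorem \ref{t11} — is necessary but not sufficient, and $(4\frac14,\frac14)$ does not lie in $span(3\frac12,\frac12)$. Simplicity is precisely what should bridge the gap, since the obstruction that pins $(3\frac12,\frac12)$ in Theorem \ref{or5} is the small multigraph gadget of Figure \ref{butter2}, which has no analogue among simple cubic graphs (indeed the worst known simple example already realizes $bed(G)=urd(4\frac14,\frac14)$).

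For bridgeless $G$ there is a shortcut: by Corollary \ref{3456} a $5$-nzf gives an $(r,\frac{5-r}{3})$-flow for every $r\in[2,5]$, and at $r=4\frac14$ this is exactly a $(4\frac14,\frac14)$-flow, so the conjecture would follow there from the $5$-flow Conjecture. For an unconditional statement, and to treat graphs with bridges (which admit no $r$-cnzf), I would redo the construction of Section \ref{ikar}: take a spanning factor $F$ as in Lemma \ref{factor}, build the skeleton $S$, and re-prove the analogue of Lemma \ref{nstep} — every pegs-subgraph has a valid orientable bisection — under a strengthened notion of ``valid'' that, on top of Rule \ref{crit} and the inequalities of Definition \ref{orient}, also preserves $9d(A)+2|A|\ge17\Delta(A)$. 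The branch-by-branch induction already controls $\Delta$ on the structured sets $A_I$ through alternating base and top sequences; the task is to revisit that case analysis and show that when $G$ is simple the alternating colorings can always be chosen with enough slack in $9d(A)+2|A|-17\Delta(A)$. Bridges are then handled like even s-edges in Claim \ref{noeven}: decompose $G$ along its bridges, color each bridgeless block, and reassemble using the freedom to swap the colors $1\leftrightarrow 2$ on a block of even order. Since a side $A$ of a bridge has $d(A)=1$, the inequality there reads $2|A|+9\ge17\Delta(A)$, and as every side of a bridge in a simple cubic graph has odd order at least $5$ while a single recoloring moves $\Delta$ by $2$, the required near-balance can be arranged on all bridges simultaneously.

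The main obstacle is the global character of $9d(A)+2|A|\ge17\Delta(A)$. For $(3\frac12,\frac12)$ a monochromatic path on three vertices is harmless, so it is enough to control each monochromatic component separately; for $(4\frac14,\frac14)$ such a path sits \emph{exactly} on the boundary — there $|A|=3$, $d(A)=5$, $\Delta(A)=3$ and $9\cdot 5+2\cdot 3=51=17\cdot 3$ — and so does any set made of several pairwise non-adjacent same-coloured such paths, so the ``excess debt'' of the tight pieces must cancel globally rather than locally. Converting this into a potential or counting invariant that survives the induction of Lemma \ref{nstep} while still obeying Rule \ref{crit} is, I expect, the real difficulty; the tightness $bed(G)=urd(4\frac14,\frac14)$ of the extremal simple graph shows there is no slack to waste.
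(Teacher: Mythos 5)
The statement you address is presented in the paper as a \emph{conjecture}, not a theorem; the paper offers no proof, only the supporting observation that the smallest simple cubic graph with no $4$-weak bisection satisfies $bed(G)=urd(4\frac14,\frac14)$ exactly, so the conjectured bound, if true, would be tight. You have correctly recognized this and written a plan of attack rather than a proof, so there is no ``paper's own proof'' to compare against. Your technical groundwork checks out: the rewriting of Condition~2 of Theorem~\ref{first} at $(4\frac14,\frac14)$ as $9d(A)+2|A|\ge 17\Delta(A)$ is correct; $tr(4\frac14,\frac14)=5$ is correct; a monochromatic path on three vertices ($|A|=3$, $d(A)=5$, $\Delta(A)=3$) really does sit on the boundary $51=51$; an orientable $5$-weak bisection is indeed necessary by Theorem~\ref{main}; $(4\frac14,\frac14)\notin urd(3\frac12,\frac12)$ since $\frac14<\frac12$; and the bridgeless case would indeed follow from the $5$-flow Conjecture via Corollary~\ref{3456} at $r=4\frac14$.

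The gaps you flag yourself are the real ones, and they are substantial. The induction in Lemma~\ref{nstep} establishes the unweighted local condition $d(A)\ge\Delta(A)$ on the structured sets $A_I$; what you now need is a weighted inequality in which $3$-vertex monochromatic paths carry \emph{zero} slack, so their deficit can accumulate across disjoint pieces of $A$ and the argument must become global. The freedom exploited in Rule~\ref{psij} (choosing the colour of each limb top) is already fully committed to maintaining $\Delta(A_I)\le d(A_I)$ and Rule~\ref{crit1}; you give no indication of where extra slack would come from, nor how simplicity of $G$ enters the induction (the current construction uses simplicity only to exclude the multigraph gadget of Figure~\ref{butter2}, which is not an inductive ingredient). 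The bridge paragraph is the weakest point: the parity observation that a side of a bridge in a simple cubic graph has odd size at least $5$ is correct, and a recolouring of a block does move $\delta$ by an even amount, but ``arrange near-balance on all bridges simultaneously'' must also preserve $9d(A)+2|A|\ge 17\Delta(A)$ for sets $A$ crossing several bridges and straddling block boundaries, and nothing in your sketch controls that. In short, this is a technically literate roadmap consistent with how the paper's machinery would naturally be pushed, but it is not a proof and the hard step — a potential or counting invariant surviving the pegs-subgraph recursion — is still missing, as you acknowledge.
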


 \subsection{What does $bed(G)$ look like?}
 Little do we actually know about the shape of $bed(G)$ in
 general.

 For every cubic graph $G$, $bed(G)$ is a closed unbounded (to the
 upper-right) polygonal domain. Always among its sides are two
 infinite ones, a vertical side on the line
 $r=2$ from $(2,\infty)$ to $(2,1)$ and a horizontal one on $\alpha=\alpha_m$ from a
 certain point $(r_m,\alpha_m)$ to $(\infty,\alpha_m)$. If $G$ is
 bridgeless then $(r_m,\alpha_m)=(\phi_c(G),0)$. If there exists
 a bridge in $G$ then a lower bound for $\alpha$ is $\frac
 1{|V_m|}$, where $|V_m|$ is the number of vertices in the smaller
 side of the bridge. The actual minimum value $\alpha_m$ may be
 larger than that, see Theorem \ref{mesh} and its proof. By
 theorem \ref{or5} the point $(3\frac12,\frac12)$ is always in
 $bed(G)$.

 Results in this article almost solely rely on the analysis of
 a certain single (balanced) orientation, rather than understanding the union of
 $bed(D)$ over several orientations $D$ of a graph $G$.
 The proof of Theorem \ref{main} asserts that the existence of a bounded excess flow $f$
 with $tr(f)<k+1$ implies the existence of a
 $(3+\frac{k-3}{k-1},\frac{k-3}{k-1})$-flow in the same orientation $D$ as
 $f$. By Lemma \ref{conv} the line segment between the
 corresponding two points is contained in $bed(D)$ and in $bed(G)$.
 Other than the above, what we can add at that stage, are mostly
 questions. Following is a rather arbitrary list of questions. At that point we cannot tell how
 hard or easy they are and how interesting
 the answers may be:
\begin{enumerate}
\item Take a bridgeless graph $G$ with $4<\phi_c(G)<5$, say
$\phi_c(G)=4 \frac12$  (For existence see e.g. \cite{lokut}). The
 line segment from $(3\frac13,\frac13)$ to $(4\frac12,0)$ is
contained in $bed(G)$. Is it a side of $bed(G)$?
\item Are the vertices of $bed(G)$ of the previous question
$(2,\infty),(2,1),(3\frac13,\frac13),(4\frac12,0)$ and
$(\infty,0)$?
\item if $G$ is bridgeless, does $bed(G)$  depends solely on
$\phi_c(G)$?
\item Does every cubic graph $G$ have a {\bf Dominant orientation}
$D$ such that $bed(G)=bed(D)$, that is, $bed(D')\subseteq bed(D)$
for every orientation $D'$ of $G$?
 \item Is $bed(G)$ always convex (It sure is if the answer to the previous question is affirmative)?

 \item Is there a constant bound to the number of sides of
 $bed(G)$ (that is a bound to the number of sets $A$ relevant to
 Condition 2 of Theorem \ref{first})?

\item Considering the proof of Theorem \ref{inf}: Are there two equivalent
points in the quadrilateral whose four vertices  are
$(4,0),(5,0),(4,\frac13)$ and $(3\frac13,\frac13)$?

\item Given a cubic graph $G$, does there always exist a point
$p$ in the $r$-$\alpha$ plane such that $bed(G)=span(p)$? If
exists, such a point represent a strongest bounded excess flow in
$G$, which is a two-dimensional generalization of the
circular-flow number $\phi_c(G)$.

\item Inspired by the 5-flow Conjecture: Is there a cubic graph
$G$ such that $bed(G)$ has a finite vertex whose trace is larger
than 5 (obviously true if the assertion of the 5-flow Conjecture
is false)?

\item In the quest for settling the 5-flow Conjecture, can we prove
the existence of a point $(r_0,\alpha_0)$ with
$tr(r_0,\alpha_0)=5$ and $\alpha_0<\frac12$ (equivalently
$r_0>3\frac12$)
 such that every {\bf bridgeless} cubic graph
admits an $(r_0,\alpha_0)$-flow?

\end{enumerate}

\end{document}